\definecolor{darkgreen}{rgb}{0.00,0.33,0.25}
\definecolor{darkred}{rgb}{0.60,0.05,0.05}
\definecolor{darkblue}{rgb}{0.05,0.05,0.60}
\theoremstyle{plain}
\newtheorem{theorem}{Theorem}[section]
\newtheorem{corollary}[theorem]{Corollary}
\newtheorem{lemma}[theorem]{Lemma}
\newtheorem{proposition}[theorem]{Proposition}
\newtheorem{definition}[theorem]{Definition}
\newtheorem*{definition*}{Definition}
\theoremstyle{remark}
\newtheorem{remark}[theorem]{Remark}
\newtheorem*{claim*}{Claim}
\newtheorem*{remark*}{Remark}
\newtheorem*{example*}{Example}
\newtheorem*{notation*}{Notation}
\numberwithin{equation}{section}
\def\N{{\mathbb N}}
\def\Z{{\mathbb Z}}
\def\R{{\mathbb R}}
\newcommand{\G}{\mathcal G}
\newcommand{\one}{{{\bf 1}}}
\newcommand{\eps}{\varepsilon}
\renewcommand{\phi}{\varphi}
\newcommand{\dd}{\; \mathrm{d}}
\newcommand{\norm}[1]{\| {#1}\|}
\newcommand{\abs}[1]{\vert {#1}\vert}
\DeclareMathOperator{\Ric}{Ric}
\DeclareMathOperator{\Hess}{Hess}
\newcommand{\ddt}{\frac{\mathrm{d}}{\mathrm{d}t}}
\newcommand{\cH}{\mathcal{H}}
\newcommand{\cB}{\mathcal{B}}
\newcommand{\cW}{\mathcal{W}}
\newcommand{\cX}{\mathcal{X}}
\newcommand{\cE}{\mathcal{E}}
\newcommand{\cA}{\mathcal{A}}
\newcommand{\cP}{\mathscr{P}}
\newcommand{\PX}{\cP(\cX)}
\newcommand{\PXs}{\cP_*(\cX)}
\newcommand{\hrho}{\hat\rho}
\renewcommand{\tilde}{\widetilde}
\begin{document}

\title{Ricci curvature bounds \\for weakly interacting Markov chains}

\author{ {Matthias Erbar}, {Christopher Henderson}, {Georg Menz},
  {Prasad Tetali} } 

\address{Matthias Erbar,
Institute for Applied Mathematics\\
University of Bonn\\
Endenicher Allee 60\\
53115 Bonn\\
Germany} 
\email{erbar@iam.uni-bonn.de}

\address{Christopher Henderson, Labex MILYON \& UMPA \& INRIA Numed Team\\ \'Ecole normale superieure de Lyon\\
46 All\'ee d'Italie\\
69007 Lyon\\ France} 
\email{christopher.henderson@ens-lyon.fr}

\address{Georg Menz, Mathematics Department\\
UCLA\\ 
Los Angeles\\ CA 90095\\ USA} 
\email{gmenz@math.ucla.edu}

\address{Prasad Tetali,
School of Mathematics\\ 
Georgia Institute of Technology\\ 
Atlanta\\ GA 30332\\ USA}
\email{tetali@math.gatech.edu}



\maketitle

 \begin{abstract}
   We establish a general perturbative method to prove entropic Ricci
   curvature bounds for interacting stochastic particle systems. We
   apply this method to obtain curvature bounds in several examples,
   namely: Glauber dynamics for a class of spin systems including the
   Ising and Curie--Weiss models, a class of hard-core models and
   random walks on groups induced by a conjugacy invariant set of
   generators.
 \end{abstract}

\tableofcontents

\section{Introduction}
\label{sec:intro}

Bounds on the Ricci curvature are an essential ingredient to control
the behavior of diffusion processes on Riemannian manifolds. For
instance, the celebrated Bakry--\'Emery criterion asserts
that a bound $\Ric+\Hess V\geq \lambda>0$ guarantees that the drift
diffusion process with generator $Lu=\Delta u-\nabla V\cdot\nabla u$
satisfies a logarithmic Sobolev inequality. The latter controls the
trend to equilibrium of the associated semigroup through the
exponential decay of the entropy. Furthermore, a large number of other
geometric and functional inequalities can be derived from curvature
bounds.

In view of this wide range of implications, considerable effort has
been devoted to developing a notion of (lower bounds for the) Ricci
curvature for non-smooth spaces. Bakry and \'Emery \cite{BE85}
introduced an approach based on algebraic properties of diffusion
operators, the so-called $\Gamma_2$-calculus. A different approach
based on optimal transport has been taken by Sturm \cite{S06} and Lott
and Villani \cite{LV09} and applies to metric measure spaces. Such a
space is said to have Ricci curvature bounded below by $\kappa$,
provided the relative entropy is $\kappa$-convex along geodesics in
the Wasserstein space of probability measures. As in the smooth case
these notions of curvature bounds entail a large number of functional
inequalities.

Unfortunately, this theory does not apply to discrete spaces and
Markov chains and many alternative notions of Ricci curvature bounds
have been developed in this setting, see
e.g.~\cite{BS09,GRST13,Oll10}. We will focus on the notion of
\emph{entropic Ricci curvature bounds} put forward in \cite{EM12,Ma11}
which applies to a finite Markov chain and seems particularly well
suited to study functional inequalities in the discrete setting. Here
the idea is to replace the role of the $L^2$-Wasserstein distance with
a new transportation distance in the Lott--Sturm--Villani
definition. It has been shown in \cite{EM12} that, in analogy with the
Bakry--\'Emery criterion, a strictly positive entropic Ricci bound
implies a modified logarithmic Sobolev inequality (MLSI). Moreover, it
entails a Poincar\'e inequality and an analogue to Talagrand's
transport cost entropy inequality.

In view of these consequences, it is desirable to obtain entropic
Ricci bounds in concrete examples of Markov chains. Relatively few results
in this direction are available to date: Mielke derived entropic Ricci
bounds for one-dimensional birth and death chains and applied these to
discretizations of Fokker--Planck equations. Erbar--Maas \cite{EM12}
obtained a tensorization result giving an entropic Ricci curvature
bound for the product of two Markov chains in terms of Ricci bounds of
the individual chains. In particular, this allows to get sharp bounds
for the random walk on the hyper-cube $\{-1,1\}^n$. First results in
high dimensions beyond product chains were obtained by
Erbar--Maas--Tetali \cite{EMT15}, considering the simple exclusion
process on the complete graph and the random transposition shuffle
models. Fathi--Maas \cite{FM15} generalized the latter results by
considering inhomogeneous jump rates in these models and obtained new
results for the zero range process.

In this work, we present a general perturbative criterion to derive
entropic Ricci curvature bounds for weakly interacting Markov chains
and apply this method in a number of examples. Perturbation methods
are well-known in the study of functional inequalities, see for
instance the Holley-Stroock criterion for the logarithmic Sobolev
inequality (LSI). 

To formulate our main results, consider an irreducible and reversible
Markov chain on a finite set $\cX$ whose generator $L$ can be written
in the form
\begin{align*}
  L\psi (x) = \sum_{\delta\in G}\Big(\psi(\delta x) - \psi(x)\Big)c(x,\delta)\;,
\end{align*}
where $G$ is a collection of bijective maps $\delta:\cX \to \cX$ and
$c:\cX\times G\to\R_+$ are the transition rates. Let $\pi$ denote the
unique reversible probability measure on $\cX$, i.e.~$\pi$ satisfies
the detailed-balance condition $c(x,\delta)\pi(x)=c(\delta
x,\delta^{-1})\pi(\delta x)$ for all $x\in\cX,\delta\in G$. Then one
of our main results is the following (see Theorem~\ref{thm:perturb}
below).

\begin{theorem}\label{thm:main-intro}
  Assume that $\delta\eta x=\eta\delta x$ for all $x\in \cX,\delta,\eta\in G$ and that
  \begin{align}\label{eq:ass-rates-intro}
    \lambda~:=~\min_{\stackrel{x\in\cX,\delta\in
      G}{c(x,\delta)>0}}\left[c(x,\delta)-\one_{\delta\neq\delta^{-1}}c(\delta
      x,\delta) - \sum_{\eta:\eta\neq\delta,\delta^{-1}}
      \frac{(q-q_*)(\delta
        x,\delta^{-1},\eta)}{c(x,\delta)\pi(x)}\right]~\geq~0\;,
  \end{align}
  where we set $q(x,\delta,\eta)=c(x,\delta)c(x,\eta)\pi(x)$ as well
  as $q_*(x,\delta,\eta)=\min\{q(x,\delta,\eta),q(\delta
  x,\delta^{-1},\eta),$ $q(\eta x,\delta,\eta^{-1}),q(\delta \eta
  x,\delta^{-1},\eta^{-1})\}$. Then, the entropic Ricci curvature of
  the chain is bounded below by $2\lambda$.
\end{theorem}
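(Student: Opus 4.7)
The plan is to invoke the Hessian characterization of entropic Ricci curvature from Erbar--Maas: the entropic Ricci bound $\Ric \geq 2\lambda$ is equivalent to the pointwise inequality $\cB(\rho,\psi) \geq 2\lambda\,\cA(\rho,\psi)$ for every density $\rho$ and test function $\psi$, where $\cA(\rho,\psi)$ is the Onsager action governing the modified Wasserstein metric and $\cB(\rho,\psi)$ arises as the second time derivative of the relative entropy along a perturbed flow. With this reduction in hand, the problem becomes a purely algebraic/analytic inequality for quadratic forms in $\psi$, parameterized by $\rho$, that should follow from the rate condition~\eqref{eq:ass-rates-intro}.

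First, I would expand $\cB(\rho,\psi)$ as a sum over ordered pairs $(\delta,\eta) \in G\times G$ of local contributions at $x$; the commutativity assumption $\delta\eta x = \eta\delta x$ is essential here, as it lets the $\cB$-term be written in terms of discrete second differences $\psi(\delta\eta x) - \psi(\delta x) - \psi(\eta x) + \psi(x)$ that are symmetric in $(\delta,\eta)$. I would separate the sum into a \emph{diagonal} part, where $\eta \in \{\delta,\delta^{-1}\}$, and an \emph{off-diagonal} part where $\eta \neq \delta,\delta^{-1}$. For the diagonal part, a direct computation expresses the contribution in terms of $c(x,\delta)$ and $c(\delta x,\delta)$ and yields exactly the first two summands of \eqref{eq:ass-rates-intro}, namely $c(x,\delta)-\one_{\delta\neq\delta^{-1}}c(\delta x,\delta)$, multiplied by logarithmic-mean weights matching those in $\cA(\rho,\psi)$.

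The crux is the off-diagonal part. Here I would symmetrize each quadruple of terms indexed by the four configurations $(x,\delta,\eta)$, $(\delta x,\delta^{-1},\eta)$, $(\eta x,\delta,\eta^{-1})$, $(\delta\eta x,\delta^{-1},\eta^{-1})$; under the commutativity hypothesis these four points form a closed ``square'' in $\cX$, so the associated second differences agree up to sign. Replacing the weight $q(x,\delta,\eta)=c(x,\delta)c(x,\eta)\pi(x)$ by its minimum $q_*$ over the square produces a nonnegative symmetric quadratic form (a discrete mixed partial derivative squared), which one discards. The error incurred is precisely $\frac{1}{2}(q-q_*)(x,\delta,\eta)$ times a cross term in the differences; a Cauchy--Schwarz (or Young) inequality then absorbs this cross term into the diagonal action contribution, at the cost of subtracting $\sum_{\eta\neq\delta,\delta^{\pm1}}(q-q_*)(\delta x,\delta^{-1},\eta)/(c(x,\delta)\pi(x))$ from the coefficient of the diagonal term, matching the last summand in \eqref{eq:ass-rates-intro}.

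The main obstacle is the bookkeeping of this symmetrization: one must carefully track which arguments of $q-q_*$ appear (note the shift $x \mapsto \delta x$ and the inversion $\delta\mapsto\delta^{-1}$ in the theorem's formula), verify that the Cauchy--Schwarz weights are compatible with the logarithmic-mean terms appearing in $\cA$, and check that discarding the nonnegative ``square'' contribution is legitimate under reversibility. Once the diagonal and off-diagonal pieces are combined, taking the minimum over all $(x,\delta)$ with $c(x,\delta)>0$ yields $\cB \geq 2\lambda \cA$, which by the Erbar--Maas characterization gives the desired entropic Ricci bound.
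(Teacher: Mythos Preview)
Your outline matches the paper's proof (Theorem~\ref{thm:perturb}): reduce to $\cB\geq 2\lambda\,\cA$ via Proposition~\ref{prop:Ric-equiv}, subtract the $q_*$-weighted symmetric part over each commuting square (nonnegative by the admissible-function argument of Proposition~\ref{prop:BBE}), absorb the residual $(q-q_*)$-weighted off-diagonal $B$-terms into the diagonal ones, and finish with Lemma~\ref{lem:B-diagonal}. One point worth sharpening: the absorption step is not a plain Cauchy--Schwarz on $\nabla_\delta\psi\cdot\nabla_\eta\psi$, because each $B(x,\delta,\eta)$ also carries the $\rho$-dependent piece $\tfrac12|\nabla_\delta\psi|^2\hat\rho_1(x,\delta x)\nabla_\eta\rho(x)$; the paper's pointwise bound (Lemma~\ref{lem:pointwise}) handles both pieces at once by checking that a $2\times2$ matrix is diagonally dominant, and this relies on the concavity properties of the logarithmic mean (Lemma~\ref{lem:lm-trick}) rather than on Young's inequality alone.
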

 
That this is a perturbative criterion can be seen as follows. It is
typical of product situations that the jump rates are homogeneous, in
the sense that $c(\delta x,\eta)=c(x,\eta)$, for all
$x,\delta,\eta$. In this case, we find $\lambda\geq 0$ and recover the
criterion established in \cite{EM12}, used to prove the tensorization
principle for entropic Ricci bounds. Theorem~\ref{thm:main-intro} is a
generalization of this criterion when a {\em quantitative} bound on
the deficit in the homogeneity of the rates is given.  As a result, a
key advantage of our results is that it gives an explicit condition on
the transition rates that can be checked directly on examples.

We apply Theorem \ref{thm:main-intro} to derive new entropic Ricci
bounds for different statistical mechanics models. In particular, we
consider Glauber dynamics for the Ising model on a general weighted
graph and a general hard-core model. In the case of the hard-core model,
we recover, in particular, the criterion derived in \cite{DPP} for
convex decay of the entropy and the MLSI. In the Ising case, the maps
$\delta$ correspond to flipping individual spins. We show that
\eqref{eq:ass-rates-intro} is satisfied for sufficiently high
temperature. For the Ising model on square-lattice and the
Curie--Weiss model we obtain a positive bound on the Ricci curvature
that is uniform in the size of the system. We note that Ollivier
\cite[Ex.~17]{Oll09} has obtained a positive bound on his notion of
coarse Ricci curvature for this chain under weaker assumptions on
the temperature (in fact, down to the single-site Dobrushin
condition). However, this notion of curvature is not known to imply
the MLSI \eqref{eq:MLSI}, for instance, among other aspects.

Finally, we develop an analogue of Theorem \ref{thm:main-intro} for a
class of Markov chains based on non-commutative maps. Namely, we
consider random walks on Cayley graphs of non-abelian groups generated
by a set invariant under conjugation. Prototypical examples are random
walks on the symmetric group $S_n$ generated by $k$-cycles. Our result
also allows to treat {\em inhomogeneous} jump rates for the random walk.
For a precise formulation we refer to Theorem~\ref{thm:Cayley-Ricci}.

\medskip

\textbf{Organization:} In Section~\ref{sec:prelim}, we recall the basic
facts about entropic Ricci curvature bounds for finite Markov
chains. In Section~\ref{sec:perturb}, we introduce the new perturbative
approach to proving Ricci bounds and give the proof of the main
results. Finally, we apply this method to different examples in
Section~\ref{sec:examples}.

\subsection*{Acknowledgement}

The authors wish to thank Max Fathi, Jan Maas and Andr\'e Schlichting
for stimulating discussions on this work and related topics. This work
originated in discussions that took place at the SQuARE meetings
\emph{Displacement convexity for interacting Markov chains} at the
American Institute for Mathematics. The authors wish to thank AIM for
the inspiring athmosphere making this collaboration possible.  E.M
gratefully acknowledges support by the German Research Foundation
through the Collaborative Research Center 1060 \emph{The Mathematics
  of Emergent Effects} and the Hausdorff Center for Mathematics. Part
of this work was performed within the framework of the LABEX MILYON
(ANR- 10-LABX-0070) of Universit\'e de Lyon, within the program
“Investissements d’Avenir” (ANR-11- IDEX-0007) operated by the French
National Research Agency (ANR). G.M. gratefully acknowledges support
by the National Science Foundation under Grant No. DMS-1440140 while
he was in residence at the Mathematical Sciences Research Institute in
Berkeley, California, during the Fall 2015 semester. P.T. gratefully
acknwoledges support by the NSF grant DMS-1407657.

\section{Entropic Ricci curvature bounds for Markov chains}
\label{sec:prelim}

Here we briefly recall the definitions of the discrete transport
distance $\cW$, the entropic Ricci curvature bounds and some of their
consequences that we will use in this paper. The discrete transport
distance (or its associated Riemannian structure) has been introduced
independently in \cite{Ma11,Mie11a}. The notion of entropic Ricci
curvature bounds for Markov chains has been introduced and studied in
\cite{EM12}.

\subsection{Discrete transport distance and Ricci bounds}
\label{sec:def}

Let $\cX$ be a finite set and let $Q:\cX\times\cX\to\R_+$ be a
collection of transition rates. Then the operator $L$ acting on
functions $\psi : \cX \to \R$ via
\begin{align*}
L \psi(x) = \sum_{y \in \cX} Q(x,y) \big(\psi(y) - \psi(x)\big)  
\end{align*}
is the generator of a continuous time Markov chain on $\cX$. We make
the convention that $Q(x,x)=0$ for all $x\in\cX$. We shall assume that
$Q$ is irreducible, i.e.~ for all $x,y\in\cX $ there exist points
$(x_1=x,x_2,\dots,x_n=y)$ such that $Q(x_i,x_{i+1})>0$ for
$i=1,\dots,n-1$. This implies that there exists a unique stationary
probability measure $\pi$ on $\cX$, i.e.~ satisfying
\begin{align*}
\sum_{x\in\cX}Q(x,y)\pi(x) = \pi(y)\;.
\end{align*}
We shall further assume that $Q$ is reversible w.r.t.~ $\pi$ i.e.~ the
detailed-balance condition holds:
\begin{align}\label{eq:reversibility}
  Q(x,y)\pi(x) = Q(y,x)\pi(y)\quad \forall x.y\in\cX\;.
\end{align}
Since $\pi$ is strictly positive, we can identify the set of
probability measures on $\cX$ with the set of probability densities
w.r.t.~ $\pi$ denoted by
\begin{align*}
\PX=\{\rho\in\R_+^{\cX}:\sum_x\rho(x)\pi(x)=1\}\;.
\end{align*}

We consider a distance $\cW$ on $\PX$  defined for $\rho_0,\rho_1 \in \PX$ by
\begin{align*}
 \cW(\rho_0, \rho_1)^2
   := \inf_{\rho, \psi} 
   \bigg\{  \frac12   \int_0^1 
  \sum_{x,y\in \cX} (\psi_t(x) - \psi_t(y))^2
    		 \theta\big(\rho(x),\rho(y)\big)  Q(x,y)\pi(x)
      \dd t 
          \bigg\}\;,
\end{align*}
where the infimum runs over all sufficiently regular curves $\rho :
[0,1] \to \PX$ and $\psi : [0,1] \to \R^\cX$ satisfying the
continuity equation
\begin{align} \label{eq:cont}
 \begin{cases}
 \displaystyle\ddt \rho_t(x) 
   + \displaystyle\sum_{y \in \cX} ( \psi_t(y) - \psi_t(x) ) \theta\big(\rho(x),\rho(y)\big) Q(x,y) ~=~0\qquad \forall x \in \cX\;, \\ 
  \rho|_{t=0} = \rho_0\;, \qquad \rho|_{t=1}  = \rho_1\;.
 \end{cases}
\end{align}
Here $\theta$ denotes the logarithmic mean given by
\begin{align*}
  \theta(s,t) = \int_0^1s^\alpha t^{1-\alpha}\dd\alpha\;.
\end{align*}
It has been shown in \cite{Ma11} that $\cW$ defines a distance on
$\PX$. It turns out that it is induced by a Riemannian structure on
the interior $\PXs$ consisting of all strictly positive probability
densities. The distance $\cW$ can be seen as a discrete analogue of
the Benamou--Brenier formulation \cite{BB00} of the continuous
$L^2$-transportation cost. The appearance of the logarithmic mean is
due to the fact that it allows one to obtain a discrete chain rule for
the logarithm, namely $\hat\rho\nabla\log\rho=\nabla\rho$, where we
write $\nabla\psi(x,y)=\psi(y)-\psi(x)$ and
$\hat\rho(x,y)=\theta\big(\rho(x),\rho(y)\big)$. This replaces the
usual identity $\rho\nabla\log\rho=\nabla\rho$. The distance $\cW$ is
tailor-made in this way such that the discrete heat equation
$\partial_t\rho=L\rho$ is the gradient flow of the relative entropy
\begin{align*}
 \cH(\rho) = \sum_{x \in \cX} \pi(x) \rho(x) \log \rho(x)
\end{align*}
w.r.t.~ the Riemannian structure induced by $\cW$
\cite{Ma11,Mie11a}. This makes $\cW$ a natural replacement of the
Wasserstein distance in the discrete setting. Moreover, it has been
proven in \cite{EM12} that every pair of densities $\rho_0, \rho_1 \in
\PX$ can be joined by a constant speed $\cW$-geodesic
$(\rho_s)_{s\in[0,1]}$. Here constant speed geodesic means that
$\cW(\rho_s,\rho_t)=|s-t|\cW(\rho_0,\rho_1)$ for all $s,t\in[0,1]$.

In analogy with the approach of Lott--Sturm--Villani, the following
definition of Ricci curvature lower bounds has been given in \cite{EM12}.

\begin{definition}\label{def:intro-Ricci}
  $(\cX,Q,\pi)$ has \emph{Ricci curvature bounded from below by
    $\kappa \in \R$} if for any constant speed geodesic $\{\rho_t\}_{t
    \in [0,1]}$ in $(\PX, \cW)$ we have
  \begin{align*}
    \cH(\rho_t) \leq (1-t) \cH(\rho_0) + t \cH(\rho_1) -
    \frac{\kappa}{2} t(1-t) \cW(\rho_0, \rho_1)^2\;.
  \end{align*}
  In this case, we write $\Ric(\cX,Q,\pi) \geq \kappa$.
\end{definition}

\subsection{Equivalent formulation via Bochner-type inequality}
\label{sec:BA}

Entropic curvature bounds can be expressed more explicitly in terms of
an inequality resembling Bochner's inequality in Riemannian
geometry. To this end, let us briefly describe the Riemannian structure
induced by $\cW$.

At each $\rho\in\cP_*(\cX)$ the tangent space to $\cP_*(\cX)$ is given
by $\mathcal{T}=\{s\in\R^\cX:\sum_xs(x)\pi(x)=0\}$. Given
$\psi\in\R^\cX$ we denote by $\nabla\psi\in\R^{\cX\times\cX}$ the quantity $\psi(x,y)=\psi(y)-\psi(x)$, which is the discrete gradient of $\psi$. Fix
$x_0\in\cX$ and let
$\mathcal{G}=\{\nabla\psi:\psi\in\R^\cX,\psi(x_0)=0\}$ denote the set
of all discrete gradient fields modulo constants. It has been shown in
\cite[Sec.~3]{Ma11} that for each $\rho\in\cP_*(\cX)$, the map
\begin{align*}
  K_\rho: \nabla\psi \mapsto \sum_y\nabla\pi(y,x)Q(x,y)\,,
\end{align*}
defines a linear bijection between $\mathcal{G}$ and the tangent space
$\mathcal{T}$. One can then define a Riemannian metric tensor on
$\cP_*(\cX)$ by using this identification and introducing the scalar
product $\langle\cdot,\cdot\rangle_\rho$ on $\mathcal{G}$ depending on $\rho$ and
given by
\begin{align*}
  \langle\nabla\psi,\nabla\phi\rangle_\rho=\frac12\sum_{x,y}\nabla\psi(x,y)\nabla\phi(x,y)Q(x,y)\pi(x)\;.
\end{align*}
Then $\cW$ is the Riemannian distance associated to this Riemannian
structure. We will use the notation
$\cA(\rho,\psi):=\norm{\nabla\psi}_\rho^2$.

Entropic Ricci bounds, i.e.~convexity of the entropy along
$\cW$-geodesics, are determined by bounds on the Hessian of the entropy
$\cH$ in the Riemannian structure defined above. An explicit
expression of the Hessian at $\rho\in\cP_*(\cX)$ is given by
\begin{align*}
  \Hess\cH(\rho)[\nabla\psi] = \frac12\sum_{x,y}\left[\frac12\hat
    L\rho (x,y)|\nabla\psi(x,y)|^2-\hat\rho(x,y)\nabla\psi(x,y)\nabla
    L\psi(x,y)\right]Q(x,y)\pi(x)\;,
\end{align*}
where we have used the notation
\begin{align*}
\hat\rho(x,y) ~&:=~ \theta(\rho(x),\rho(y))\;,\\
 \hat L \rho(x,y) ~&:=~  
 \partial_1\theta\big(\rho(x),\rho(y)\big) L\rho(x) 
  + \partial_2\theta\big(\rho(x),\rho(y)\big) L\rho(y)\;.
\end{align*}
Setting 
$\cB(\rho,\psi):=\Hess\cH(\rho)[\nabla\psi]$ for brevity, we then have the
following equivalent characterization of entropic Ricci bounds.

\begin{proposition}[{\cite[Thm.~4.4]{EM12}}]\label{prop:Ric-equiv}
  A Markov triple $(\cX,Q,\pi)$ satisfies $\Ric(\cX,Q,\pi)\geq\kappa$
  if and only if for every $\rho\in\cP_*(\cX)$ and every
  $\psi\in\R^\cX$ we have
  \begin{align*}
    \cB(\rho,\psi)~\geq~\kappa \cA(\rho,\psi)\;.
  \end{align*}
\end{proposition}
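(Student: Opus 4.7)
The plan is to recognize this statement as the standard Riemannian equivalence between geodesic $\kappa$-convexity of a smooth functional and a pointwise lower bound on its Hessian, here in the Riemannian structure on $\cP_*(\cX)$ induced by $\cW$. The preparatory step is to verify that, along any sufficiently regular curve $(\rho_t,\psi_t)$ satisfying the continuity equation~\eqref{eq:cont}, one has
\begin{align*}
\ddt\cH(\rho_t) ~=~ \tfrac12\sum_{x,y}\nabla\psi_t(x,y)\nabla\log\rho_t(x,y)\,\hat\rho_t(x,y)Q(x,y)\pi(x)
\end{align*}
and that, when $(\rho_t,\psi_t)$ furthermore solves the geodesic equations (i.e.\ $\psi_t$ satisfies the associated Hamilton--Jacobi equation), a second differentiation produces precisely $\ddtt\cH(\rho_t)=\cB(\rho_t,\psi_t)$. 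This relies on repeated use of the discrete chain rule $\hat\rho\nabla\log\rho=\nabla\rho$ and of the detailed-balance relation~\eqref{eq:reversibility}, and is a mechanical but lengthy manipulation.

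For the direction \emph{Bochner $\Rightarrow$ $\kappa$-convexity}: fix $\rho_0,\rho_1\in\cP_*(\cX)$ and a constant speed $\cW$-geodesic $(\rho_t)_{t\in[0,1]}$ joining them, whose existence has already been recalled. Along the associated potential $\psi_t$ one has, on the interior part of the geodesic, $\cA(\rho_t,\psi_t)=\cW(\rho_0,\rho_1)^2$ and $\ddtt\cH(\rho_t)=\cB(\rho_t,\psi_t)\geq\kappa\cA(\rho_t,\psi_t)$. Integrating this second-order differential inequality in $t$ yields the convexity inequality of Definition~\ref{def:intro-Ricci}. The case where $\rho_0$ or $\rho_1$ lies on the boundary of $\PX$ is handled by approximation with interior densities, using lower semicontinuity of $\cH$ and continuity of $\cW$.

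For the converse direction: fix $\rho\in\cP_*(\cX)$ and $\psi\in\R^\cX$, and use the local solvability of the geodesic ODEs in the smooth Riemannian manifold $\cP_*(\cX)$ to produce a short constant speed geodesic $(\rho_s)_{s\in(-\eps,\eps)}$ with $\rho_0=\rho$ and initial momentum represented by $\nabla\psi$, so that in particular $\ddtt|_{s=0}\cH(\rho_s)=\cB(\rho,\psi)$ and $\cW(\rho_0,\rho_s)^2/s^2\to\cA(\rho,\psi)$ as $s\to0$. Dividing the $\kappa$-convexity inequality by $s^2$ at $t=1/2$ and letting $s\to0$ yields $\cB(\rho,\psi)\geq\kappa\cA(\rho,\psi)$.

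The main obstacle is the bookkeeping in the Hessian computation and the boundary behaviour of $\cW$-geodesics. The Hessian formula requires carefully differentiating $\partial_i\theta(\rho(x),\rho(y))$ and exploiting symmetries coming from $Q(x,y)\pi(x)=Q(y,x)\pi(y)$ to recover the clean expression $\cB(\rho,\psi)$; I would organize this by first writing $\ddt\cH(\rho_t)=\langle\nabla\psi_t,\nabla\log\rho_t\rangle_{\rho_t}$ and then differentiating each factor. The boundary issue is more delicate: geodesics between degenerate densities can leave $\cP_*(\cX)$, so to apply the interior Hessian calculation one has to either restrict to densities bounded away from zero and extend by approximation, or invoke the regularity results on $\cW$-geodesics from \cite{EM12}. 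The cleanest route is the approximation argument, which uses that $\cH$ is continuous on $\PX$ and that $\cW$ is continuous on $\PX\times\PX$.
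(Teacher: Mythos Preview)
The paper does not contain a proof of this proposition: it is quoted verbatim as \cite[Thm.~4.4]{EM12} and merely remarked upon (``Note that this statement is non-trivial since the Riemannian metric degenerates at the boundary of $\PX$''). So there is no in-paper argument to compare your proposal against.

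That said, your sketch is the standard one and is essentially the argument given in \cite{EM12}. A couple of points worth tightening. First, the identification $\ddtt\cH(\rho_t)=\cB(\rho_t,\psi_t)$ along a geodesic is exactly the content of the Hessian computation recalled just before the proposition, so you are right that this is the heart of the matter; but note that in \cite{EM12} the derivation proceeds via an explicit system of Hamiltonian ODEs for $(\rho_t,\psi_t)$, and the ``Hamilton--Jacobi'' terminology you use is only formal here. Second, the delicate part that the paper flags---degeneracy of the metric at the boundary---is genuinely the obstacle: a constant speed geodesic between points of $\PX$ need not stay in $\cP_*(\cX)$, so one cannot simply invoke the smooth Riemannian Hessian along it. Your proposed fix by approximation from the interior is the right idea, but it requires more than continuity of $\cH$ and $\cW$: one needs that interior geodesics converge (uniformly) to the given geodesic as the endpoints are approximated, which in \cite{EM12} is handled via compactness and a characterization of minimizing curves. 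Your converse direction (short geodesics through $\rho\in\cP_*(\cX)$, second-order Taylor expansion) is correct as stated, since locally around an interior point the Riemannian structure is smooth and nondegenerate.
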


Note that this statement is non-trivial since the Riemannian metric
degenerates at the boundary of $\PX$. In view of the explicit
expressions of $\cA$ and $\cB$, the criterion above closely resembles
(an integrated version of) the classical Bochner inequality or
Bakry--\'Emery $\Gamma_2$-criterion. Namely, a Riemannian manifold $M$
satisfies $\Ric\geq\kappa$ if and only if for every pair of smooth
functions $\rho,\psi:M\to\R$ we have:
\begin{align*}
\int_M \frac12 \left[L\rho |\nabla\psi|^2 -\rho\langle\nabla\psi,\nabla L\psi\rangle \right] \dd\mathrm{vol}
~\geq~ \int_M\rho|\nabla\psi|^2\dd\mathrm{vol}\;,  
\end{align*}
where $\nabla$ now denotes the usual gradient and $L$ denotes the
Laplace--Beltrami operator. In fact, the left hand side equals the
Hessian of the entropy in Otto's formal Riemannian structure on
$\cP(M)$ associated with the $L^2$-Wasserstein distance $W_2$.

\subsection{Functional inequalities and trend to equilibrium}

Entropic Ricci curvature lower bounds have many consequences in terms
of functional inequalities as was shown in \cite[Sec.~7]{EM12}. More
precisely, if a Markov triple $(\cX,Q,\pi)$ satisfies
$\Ric(\cX,Q,\pi)\geq\kappa$ with $\kappa >0$ then the following hold:
\begin{itemize}
\item a modified logarithmic Sobolev inequality MLSI($\kappa$):
  \begin{align}\label{eq:MLSI}
    \cH(\rho) \leq \frac{1}{2\kappa} \cE(\rho,\log\rho)\quad \forall\rho\in\PXs\;,
  \end{align}
\item a modified Talagrand inequality T$_\cW(\kappa)$:
  \begin{align}\label{eq:TW}
    \cW(\rho,\one)^2 \leq \frac{2}{\kappa} \cH(\rho)\quad \forall\rho\in\PX\;,
  \end{align}
\item a Poincar\'e inequality P($\kappa$):
  \begin{align}\label{eq:Poinc}
     \mathrm{Var}_\pi(\psi) \leq \frac1\kappa \cE(\psi,\psi)\quad \forall \psi\;,
  \end{align}
\end{itemize}
where $\mathrm{Var}_\pi(\psi)=\pi[\psi^2]-\pi[\psi]^2$ and $\cE$ is a
discrete Dirichlet form given as
\begin{align*}
  \cE(\psi,\phi) = \frac12\sum_{x,y\in\cX} \big(\psi(y)-\psi(x)\big)\big(\phi(y)-\phi(x)\big)Q(x,y)\pi(x)\;.
\end{align*}

It is well known that the modified logarithmic Sobolev inequality and
the Poincar\'e inequality govern the trend to equilibrium of the
Markov semigroup $P_t=e^{tL}$. Indeed, noting that
\begin{align*}
  \ddt\cH(P_t\rho)=-\cE(P_t\rho,\log P_t\rho)\;, \qquad \ddt\mathrm{Var}(P_t\psi)=-\cE(P_t\psi,P_t\psi)\;,
\end{align*}
the Gronwall lemma together with the inequalities \eqref{eq:MLSI} and
\eqref{eq:Poinc} yield the exponential convergence estimates
\begin{align*}
   \cH(P_t\rho)\leq e^{-2\kappa t}\cH(\rho)\;.
   \qquad 
   \mathrm{Var}(P_t\psi)\leq e^{-\kappa t}\mathrm{Var}(\psi)\;,
\end{align*}

Let us make the connection to the notion of \emph{convex entropy
  decay} and the Bakry--\'Emery approach to the MLSI developed in the
discrete setting in \cite{CDPP,DPP}. This approach is based on the
following observation (see \cite{CDPP}):
\begin{lemma}
 Let $\kappa>0$ and assume that the convex entropy decay inequality
 \begin{align}\label{eq:ced}
   \sum_x\left[L\rho(x)L\log\rho(x) + \frac{(L\rho)^2}{\rho}\right]\pi(x) \geq \kappa \cE(\rho,\log\rho)
 \end{align}
 holds for all $\rho\in\cP_*(\cX)$. Then MLSI$(\kappa)$ holds.
\end{lemma}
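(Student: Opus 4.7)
The plan is to use the convex entropy decay inequality \eqref{eq:ced} as a differential inequality for the Fisher information $I(\rho):=\cE(\rho,\log\rho)$ along the heat semigroup $P_t=e^{tL}$, and then integrate in time to recover an entropy bound. Fix $\rho\in\PXs$ and set $\rho_t=P_t\rho$, $H(t)=\cH(\rho_t)$, $I(t)=I(\rho_t)$.

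The first step is to compute $H'(t)$ and $I'(t)$. Using $\partial_t\rho_t=L\rho_t$, stationarity of $\pi$, and reversibility (equivalently, self-adjointness of $L$ in $L^2(\pi)$), a short computation gives the standard entropy dissipation identity $H'(t)=-I(t)$. For $I'(t)$, starting from $I(t)=-\sum_x\pi(x)\log\rho_t(x)\,L\rho_t(x)$ and differentiating under the sum, the chain rule $\partial_t\log\rho_t=L\rho_t/\rho_t$ handles one factor while reversibility converts $\sum_x\pi\log\rho_t\cdot L^2\rho_t$ into $\sum_x\pi\,L\log\rho_t\cdot L\rho_t$, yielding
\[ I'(t) \;=\; -\sum_{x\in\cX}\left[L\rho_t(x)\,L\log\rho_t(x)+\frac{(L\rho_t(x))^2}{\rho_t(x)}\right]\pi(x). \]

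The second step is to apply the hypothesis \eqref{eq:ced} to $\rho_t$ at each time $t$: this says precisely that the right-hand side above is bounded above by $-\kappa I(t)$, so Gronwall's inequality yields the exponential decay $I(t)\leq e^{-\kappa t}I(0)$ for all $t\geq 0$.

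The third and final step integrates the dissipation identity $H'(t)=-I(t)$ in time:
\[ \cH(\rho) \;=\; H(0)-H(\infty) \;=\; \int_0^\infty I(t)\dd t \;\leq\; I(0)\int_0^\infty e^{-\kappa t}\dd t \;=\; \frac{1}{\kappa}\cE(\rho,\log\rho), \]
where $H(\infty)=0$ follows from irreducibility of the chain on the finite set $\cX$ (so $\rho_t\to\one$ as $t\to\infty$), and tracking the normalization in \eqref{eq:MLSI} delivers MLSI$(\kappa)$. The only step that requires any thought is the computation of $I'(t)$ and its identification with the negative of the left-hand side of \eqref{eq:ced}; this is purely a reversibility-plus-chain-rule manipulation. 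No analytic subtleties arise since $\cX$ is finite and $P_t$ preserves strict positivity, so all sums are finite and $\rho_t$ is smooth in $t\in[0,\infty)$.
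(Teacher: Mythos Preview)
Your argument is correct and is essentially the paper's own: both identify the left-hand side of \eqref{eq:ced} as $\frac{\mathrm{d}^2}{\mathrm{d}t^2}\cH(P_t\rho)$ (equivalently, $-I'(t)$), deduce the differential inequality along the semigroup, and integrate in time. The only cosmetic difference is that the paper integrates $H''\geq -\kappa H'$ once from $t$ to $\infty$ to obtain $H'\leq -\kappa H$ directly, whereas you first apply Gronwall to $I'\leq -\kappa I$ and then integrate $H'=-I$; these are equivalent manipulations yielding the same bound $\cH(\rho)\leq \tfrac{1}{\kappa}\cE(\rho,\log\rho)$.
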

The idea is that to note that
\[
	\frac{\mathrm{d}^2}{\mathrm{d}t^2}\cH(P_t\rho)=\sum_x\left[LP_t\rho(x)L\log
  P_t\rho(x) + \frac{(LP_t\rho)^2}{P_t\rho}\right]\pi(x).
\]
Thus, \eqref{eq:ced} asserts that
\[\frac{\mathrm{d}^2}{\mathrm{d}t^2}\cH(P_t\rho)\leq
-\kappa\frac{\mathrm{d}}{\mathrm{d}t}\cH(P_t\rho).
\]
After integration, this inequality yields $\frac{\mathrm{d}}{\mathrm{d}t}\cH(P_t\rho)\leq
-\kappa \cH(P_t\rho)$, and thus~ MLSI$(\kappa)$.

Now, a direct calculation reveals that 
\begin{align*}
  \cA(\rho,\log\rho) &= \cE(\rho,\log(\rho))\;,\\
   \cB(\rho,\log\rho) &= \sum_x\left[L\rho(x)L\log\rho(x) + \frac{(L\rho)^2}{\rho}\right]\pi(x)\;.
\end{align*}
Thus, we obtain that $\Ric(\cX,Q,\pi)\geq \kappa$ implies, in
particular, the convex entropy decay inequality \eqref{eq:ced}.

Finally, we recall that entropic Ricci bounds also imply exponential
contraction in the discrete transport distance $\cW$
\cite[Prop.~4.7]{EM12}. More precisely, if
$\Ric(\cX,Q,\pi)\geq\kappa$, then for all $\rho_0,\rho_1\in\PX$ we
have
\begin{align*}
  \cW(P_t\rho_0,P_t\rho_1)\leq e^{-\kappa t}\cW(\rho_0,\rho_1)\;.
\end{align*}

\section{A perturbative approach to Ricci bounds}
\label{sec:perturb}

In this section we present a general method to obtain entropic Ricci
bounds for systems of weakly interacting Markov chains. The method
starts from Proposition \ref{prop:Ric-equiv} and proceeds in two steps to
establish the inequality $ cB\geq\kappa\cA$. The first one consist in
reorganizing the $\cB$-term, identifying non-negative contributions
and giving a first lower bound by neglecting these. A general method
for this, the so called Bochner-Bakry-\'Emery approach, was developed
in \cite{DPP} in the study of spectral gap, MLSI and convex entropy
decay and was generalized in \cite{FM15} to the level of Ricci
curvature. We will recall this approach in Section \ref{sec:BBE} and
give a short simplified proof. The second step, detailed in Section
\ref{sec:perturb-sub}, constitutes our main result and gives a final bound on
$\cB$ using the fact that the interactions are weak.

Before we proceed, we introduce a different representation of the
Markov chain that will be convenient in the sequel. Let $G$ be a set
of maps from $\cX$ to itself (called allowed moves) and consider a
function $c:\cX\times G\to\R_+$ (called jump rates).

\begin{definition}\label{def:mappingrepresentation}
  We call the pair $(G,c)$ a \emph{mapping representation} of $Q$ if
  the following properties hold:
  \begin{enumerate}
  \item The generator $L$ can be written in the form
    \begin{align}\label{eq:generator-mappingform}
      L\psi(x)~=~\sum\limits_{\delta\in G} \nabla_\delta \psi(x)
      c(x,\delta)\ ,
    \end{align}
    where
    \begin{align*}
      \nabla_\delta \psi(x) = \psi(\delta x)-\psi(x)\;.
    \end{align*}
  \item For every $\delta\in G$ there exists a unique $\delta^{-1}\in
    G$ satisfying $\delta^{-1}(\delta(x))=x$ for all $x$ with
    $c(x,\delta)>0$.
  \item For every $F:\cX\times G\to\R$ we have
    \begin{align}\label{eq:rev}
      \sum\limits_{x\in\cX,\delta\in
        G}F(x,\delta)c(x,\delta)\pi(x)~=~\sum\limits_{x\in\cX,\delta\in
        G}F(\delta x,\delta^{-1})c(x,\delta)\pi(x)\ .
    \end{align}
  \end{enumerate}
\end{definition}

Note that the detailed-balance condition~\eqref{eq:reversibility} turns into
$$c(x,\delta)\pi(x)=c(\delta x,\delta^{-1})\pi(\delta x)\quad \forall x\in\cX,\ \delta\in G\;.$$
Every irreducible, reversible Markov chain has a mapping
representation. In fact, an explicit mapping representation can be
obtained as follows. For $x,y\in\cX$ consider the bijection
$t_{\{x,y\}}:\cX\to\cX$ that interchanges $x$ and $y$ and keeps all
other points fixed. Then let $G$ be the set of all these
``transpositions'' and set $c(x,t_{\{x,y\}})=Q(x,y)$ and
$c(x,t_{\{y,z\}})=0$ for $x\notin \{y,z\}$. Then $(G,c)$ defines a
mapping representation. However, in examples it is often more natural
to work with a different mapping representation involving a smaller
set $G$.

Using a mapping representation $(G,c)$ of $Q$, we can write out the
quantities $\cA$ and $\cB$ explicitly. We obtain
\begin{align}\label{eq:A-mappingform}
  \cA(\rho,\psi)~=~\frac12 \sum\limits_{x\in\cX,\delta\in G}\big(\nabla_\delta\psi(x)\big)^2\hat\rho(x,\delta x)c(x,\delta)\pi(x)\;.
\end{align}
Moreover, setting for convenience $\hrho_i(x,y)
:= \partial_i\theta(\rho(x),\rho(y))$ for $i=1,2$, we get
\begin{equation}\begin{aligned}
\label{eq:T1-mappingform}
  &\cB(\rho,\psi)~=~\\
  &\frac{1}{4}\sum\limits_{x\in\cX}\sum\limits_{\delta,\eta\in G}\big(\nabla_\delta\psi(x)\big)^2\bigg[\hat{\rho}_1(x,\delta
  x)\nabla_\eta\rho(x)c(x,\eta) +\hat{\rho}_2(x,\delta x)\nabla_\eta\rho(\delta x)c(\delta
  x,\eta)\bigg]c(x,\delta)\pi(x)\\
&\qquad\qquad-2\nabla_\delta\psi(x)\bigg[\nabla_\eta\psi(\delta x)c(\delta x,\eta)-\nabla_\eta\psi(x)c(x,\eta)\bigg]\hat{\rho}(x,\delta x) c(x,\delta)\pi(x)\\
&=~\frac12\sum\limits_{x,\delta,\eta}\Big[\abs{\nabla_\delta\psi}^2(x)\hat{\rho}_1(x,\delta
  x)\nabla_\eta\rho(x) - 2\nabla_\delta\psi(x)\nabla_\eta\psi(x)\hat{\rho}(x,\delta x)\Big]c(x,\eta)c(x,\delta)\pi(x)\;.
\end{aligned}\end{equation}
Here we have used reversibility and the fact that
$\hrho_1(x,y)=\hrho_2(y,x)$ in the last equality.

\begin{remark}\label{rmk:enlarge}
  It will be convenient sometimes to allow more flexibility in the
  mapping representation by considering a larger space
  $\cX'\supset\cX$ and a collection $G'$ of maps from $\cX'$ to
  itself. We trivially extend $\pi$ by $0$ to a probability measure on
  $\cX'$ and similarly the rates $Q$ to $\cX'\times\cX'$. $G'$
  together with a function $c':\cX'\times G'\to \R_+$ will still be
  called a mapping representation if all the properties of Definition
  \ref{def:mappingrepresentation} hold. In particular, we have
  $c(x,\delta)=0$ if $x$ or $\delta x$ belongs to $\cX'\setminus\cX$.
  Obviously, for any $\rho\in\cP(\cX'),\psi\in\R^{\cX'}$, the
  expressions in the right hand side of \eqref{eq:A-mappingform}and
  \eqref{eq:T1-mappingform} calculated with the extended mapping
  representation $(G',c')$ coincide with the original quantities
  $\cA(\rho|_{\cX},\psi|_{\cX})$ and $\cB(\rho|_{\cX},\psi|_{\cX})$.
\end{remark}

\subsection{The Bochner--Bakry--\'Emery approach to Ricci bounds}\label{sec:BBE}

Here we briefly recall the main result of \cite{FM15}, a general
method to identify non-negative contributions to the $\cB$-term.

For convenience, we give a short and simplified proof.

\begin{definition}\label{def:R}
  We call a function $R:\cX\times G\times G\to\R_+$ admissible for $Q$
  if (and only if)
   \begin{itemize}
    \item[(i)] $\delta\eta x=\eta\delta x$ for all $x,\delta,\eta$ with $R(x,\delta,\eta)>0$,
    \item[(ii)] $R(x,\delta,\eta)=R(x,\eta,\delta)$ for all $x,\delta,\eta$ with $c(x,\delta)c(x,\eta)>0$,~~and
    \item[(iii)] $R(x,\delta,\eta)=R(\delta x,\delta^{-1},\eta)$ for all $x,\delta,\eta$ with $c(x,\delta)c(x,\eta)>0$.
  \end{itemize}
\end{definition}

\begin{proposition}[{\cite[Thm.~3.5]{FM15}}]\label{prop:BBE}
  Let $R$ be admissible for $Q$ and define $\Gamma:\cX\times G\times
  G\to\R$ via
  $\Gamma(x,\delta,\eta)=c(x,\delta)c(x,\eta)\pi(x)-R(x,\delta,\eta)$. Then
  we have
  \begin{align}\label{eq:Gamma}
    \cB(\rho,\psi)
    ~&\geq~
  \sum\limits_{x,\delta,\eta}\Gamma(x,\delta,\eta)\left[\frac12\abs{\nabla_\delta\psi}^2(x)\hat{\rho}_1(x,\delta
  x)\nabla_\eta\rho(x) + \nabla_\delta\psi(x)\nabla_\eta\psi(x)\hat{\rho}(x,\delta x)\right]\;.
  \end{align}
\end{proposition}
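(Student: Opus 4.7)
The target inequality~\eqref{eq:Gamma} is equivalent, after substituting the explicit formula~\eqref{eq:T1-mappingform} for $\cB(\rho,\psi)$ and writing $c(x,\delta)c(x,\eta)\pi(x) = \Gamma + R$, to the nonnegativity of the residual
\begin{align*}
\mathcal R \;:=\;& \tfrac12\sum_{x,\delta,\eta} R\,|\nabla_\delta\psi|^2\hat\rho_1(x,\delta x)\nabla_\eta\rho(x) + \sum_{x,\delta,\eta} R\,\nabla_\delta\psi(x)\nabla_\eta\psi(x)\hat\rho(x,\delta x) \\
&- 2\sum_{x,\delta,\eta} c(x,\delta)c(x,\eta)\pi(x)\,\nabla_\delta\psi(x)\nabla_\eta\psi(x)\hat\rho(x,\delta x)\;.
\end{align*}
The plan is to show $\mathcal R \geq 0$ by reorganising it as a sum of squares using the symmetries of $R$.

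Properties~(ii) and~(iii) of Definition~\ref{def:R}, combined with reversibility, imply that on the support of $R$ the function $R$ is invariant under the group of index substitutions generated by the swap $(x,\delta,\eta)\mapsto(x,\eta,\delta)$ and the two involutions $(x,\delta,\eta)\mapsto(\delta x,\delta^{-1},\eta)$ and $(x,\delta,\eta)\mapsto(\eta x,\delta,\eta^{-1})$; orbits have size at most four. Hypothesis~(i) further guarantees that on the support of $R$ one has $\delta\eta x=\eta\delta x$, so that the discrete mixed difference
\[
\Lambda(x,\delta,\eta) := \nabla_\eta\psi(\delta x)-\nabla_\eta\psi(x) = \nabla_\delta\psi(\eta x)-\nabla_\delta\psi(x)
\]
is well-defined and symmetric in~$\delta$ and~$\eta$.

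I would then symmetrise each $R$-weighted sum by averaging over orbits of this group, using reversibility $c(x,\delta)\pi(x)=c(\delta x,\delta^{-1})\pi(\delta x)$ together with the symmetry $\hat\rho_1(x,y)=\hat\rho_2(y,x)$ of the logarithmic mean. A first application of the $(x,\delta,\eta)\mapsto(\delta x,\delta^{-1},\eta)$-invariance combined with the change of variables $y=\delta x$ should yield the identity
\[
\sum R\,\nabla_\delta\psi(x)\nabla_\eta\psi(x)\hat\rho(x,\delta x) = -\tfrac12\sum R\,\nabla_\delta\psi(x)\,\Lambda(x,\delta,\eta)\,\hat\rho(x,\delta x)\;,
\]
and the analogous manipulation rewrites the $R$-weighted square term in its symmetric form featuring $\hat\rho_2(x,\delta x)\nabla_\eta\rho(\delta x)$, reminiscent of the discrete chain rule for $\theta$. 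A further symmetrisation of the $-2\sum c(x,\delta)c(x,\eta)\pi(x)\nabla_\delta\psi\nabla_\eta\psi\hat\rho$ piece, exploiting the $\delta\leftrightarrow\eta$ symmetry of $q$, should then allow the three contributions to be combined: the indefinite cross terms involving $\nabla_\delta\psi(x)\,\Lambda$ cancel, and what survives is a manifestly nonnegative sum of squares of $\Lambda$ weighted by nonnegative factors, giving $\mathcal R\geq 0$.

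The main obstacle is the algebraic bookkeeping: each substitution produces swaps $\hat\rho_1\leftrightarrow\hat\rho_2$, sign reversals in $\nabla$, and shifts of the base point, and degenerate orbits (when $\delta=\delta^{-1}$, or when the triple has additional symmetries collapsing the orbit below size four) need individual attention so that the combinatorial factors come out right. One also has to ensure that hypothesis~(i) is only invoked on the support of $R$, where it is assumed.
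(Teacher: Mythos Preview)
Your residual $\mathcal R$ is miscomputed, and this derails the whole argument. The last display in~\eqref{eq:T1-mappingform} contains a sign slip; the correct identity (which you can verify directly from the first form in~\eqref{eq:T1-mappingform} using reversibility, and which is the one actually used throughout the paper, cf.~\eqref{eq:Bterms} and the line following it) is
\[
  \cB(\rho,\psi)=\sum_{x,\delta,\eta} c(x,\delta)c(x,\eta)\pi(x)\Big[\tfrac12|\nabla_\delta\psi|^2(x)\hat\rho_1(x,\delta x)\nabla_\eta\rho(x)+\nabla_\delta\psi(x)\nabla_\eta\psi(x)\hat\rho(x,\delta x)\Big]\,.
\]
With this, writing $q=\Gamma+R$, the difference $\cB-\text{RHS of~\eqref{eq:Gamma}}$ is simply
\[
  B:=\sum_{x,\delta,\eta}R(x,\delta,\eta)\Big[\tfrac12|\nabla_\delta\psi|^2(x)\hat\rho_1(x,\delta x)\nabla_\eta\rho(x)+\nabla_\delta\psi(x)\nabla_\eta\psi(x)\hat\rho(x,\delta x)\Big]\,,
\]
i.e.\ your first two summands; the third summand $-2\sum q\,\nabla_\delta\psi\nabla_\eta\psi\,\hat\rho$ should not be there. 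As a sanity check: $R\equiv 0$ is admissible, but then your $\mathcal R=-2\sum q\,\nabla_\delta\psi\nabla_\eta\psi\,\hat\rho$, which on the two-point chain with uniform rates, $\rho\equiv 1$ and $\psi$ taking values $0,1$, equals $-2<0$. So $\mathcal R\geq 0$ is false as stated. Your hope that a $\delta\leftrightarrow\eta$ symmetrisation of the $q$-weighted piece would make it combine with the $R$-weighted terms cannot work either: $q$ does not enjoy property~(iii), and the factor $\hat\rho(x,\delta x)$ destroys the $\delta\leftrightarrow\eta$ symmetry of the summand.

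Once you drop the spurious term and work with $B$, your sketch is essentially the paper's proof: symmetrise in $(x,\delta)\mapsto(\delta x,\delta^{-1})$ using~(iii) to get your identity for the cross term and the companion form $\hat\rho_1\nabla_\eta\rho(x)+\hat\rho_2\nabla_\eta\rho(\delta x)$ for the square term; invoke the logarithmic-mean inequalities \eqref{eq:lm-trick1}--\eqref{eq:lm-trick2}; use~(i) to rewrite $\nabla_\eta\psi(x)-\nabla_\eta\psi(\delta x)=\nabla_\delta\psi(x)-\nabla_\delta\psi(\eta x)$; then symmetrise once more in $(x,\eta)\mapsto(\eta x,\eta^{-1})$ and complete the square. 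What remains is exactly a nonnegative sum of terms $\big|\nabla_\delta\psi(x)+\nabla_\delta\psi(\eta x)\big|^2\big(\hat\rho(x,\delta x)+\hat\rho(\eta x,\delta\eta x)\big)$ weighted by $R\ge 0$.
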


\begin{proof}
  The proof works verbatim as \cite[Prop. 5.4]{EM12}, using the
  properties (i)-(iii) of Definition~\ref{def:R}, instead of the
  conditions on $c$ given there.
  Recalling \eqref{eq:T1-mappingform} it suffices to show that
  \begin{align}\label{eq:Rtrick}
  B:=\sum\limits_{x,\delta,\eta}R(x,\delta,\eta)\left[\frac12\abs{\nabla_\delta\psi}^2(x)\hat{\rho}_1(x,\delta
  x)\nabla_\eta\rho(x) + \nabla_\delta\psi(x)\nabla_\eta\psi(x)\hat{\rho}(x,\delta x)\right]~\geq~0\;.
 \end{align}
 We first use (iii) to symmetrize in $x$ and $\delta x$ and obtain
 \begin{align*}
   B = \frac12\sum\limits_{x,\delta,\eta}R(x,\delta,\eta)&\Big[\frac12\abs{\nabla_\delta\psi}^2(x)\big[\hat{\rho}_1(x,\delta
  x)\nabla_\eta\rho(x) +\hat{\rho}_2(x,\delta x)\nabla_\eta\rho(\delta x)\big]\\
    &+ \nabla_\delta\psi(x)\big[\nabla_\eta\psi(x)-\nabla_\eta\psi(\delta x)\big]\hat{\rho}(x,\delta x)\Big]\;.
 \end{align*}
 In the first term we use the (in-)equalities \eqref{eq:lm-trick1} and
 \eqref{eq:lm-trick2}, while in the second term we use (i) and the
 fact that $\nabla_\eta\psi(x)-\nabla_\eta\psi(\delta x)=
 \nabla_\delta\psi(x)-\nabla_\delta\psi(\eta x)$ provided $\delta\eta
 x=\eta\delta x$. This yields
 \begin{align*}
   B= \frac14 \sum\limits_{x,\delta,\eta}R(x,\delta,\eta)\Big[\abs{\nabla_\delta\psi}^2(x)\big[\hat{\rho}(\eta x,\delta
  \eta x) + \hat{\rho}(x,\delta x)\big]
    + \nabla_\delta\psi(x)\nabla_\delta\psi(\eta x)\hat{\rho}(x,\delta x)\Big]\;.
 \end{align*}
 Finally, we use (iii) again to symmetrize in $x$ and $\eta x$, and
 complete the square to get
 \begin{align*}
    B&= \frac18 \sum\limits_{x,\delta,\eta}R(x,\delta,\eta)\Big[\abs{\nabla_\delta\psi}^2(x) + \abs{\nabla_\delta\psi}^2(\eta x)
           + \nabla_\delta\psi(x)\nabla_\delta\psi(\eta x)\Big]\big[\hat{\rho}(\eta x,\delta\eta x) + \hat{\rho}(x,\delta x)\big]\\
     &\geq
        \frac1{16} \sum\limits_{x,\delta,\eta}R(x,\delta,\eta)\big|\nabla_\delta\psi(x) + \nabla_\delta\psi(\eta x)\big|^2
        \big[\hat{\rho}(\eta x,\delta\eta x) + \hat{\rho}(x,\delta x)\big]
      \geq 0\;,
 \end{align*}
 which finishes the proof.
\end{proof}

\subsection{The perturbative criterion}\label{sec:perturb-sub}
Here we present our main result: a general entropic Ricci bound for
weakly interacting Markov chains (see Theorems \ref{thm:perturb} and
\ref{thm:Cayley-Ricci} below).

We start by introducing the following notation. For any $\psi\in\R^\cX$
and $\rho\in\PX$ we write
\begin{align}\label{eq:Bterms}
  B(\rho,\psi)(x,\delta,\eta) ~:=~ \frac12\abs{\nabla_\delta
    \psi}^2(x)\hat\rho^1(x,\delta x)\nabla_\eta\rho(x) +
  \nabla_\delta\psi(x) \nabla_\eta\psi(x)\hat\rho(x,\delta x)\;.
\end{align}
We will often suppress the dependence on $\rho,\psi$, writing simply
$B(x,\delta,\eta)$, if no confusion can arise. Note that with this
notation
\begin{align*}
  \cB(\rho,\psi)~=~\sum\limits_{x\in\cX}\sum\limits_{\delta,\eta\in G}B(\rho,\psi)(x,\delta,\eta)c(x,\delta)c(x,\eta)\pi(x)\;.
\end{align*}

In this sum, we distinguish between two types of contributions,
namely \emph{diagonal} contributions of the form
$B(\rho,\psi)(x,\delta,\delta)$ and \emph{off-diagonal} contributions
of the form $B(\rho,\psi)(x,\delta,\eta)$ with $\eta\neq\delta$.  In
the proof of our main result we obtain a lower bound on $\cB$ using
three ingredients. We will first show in Lemma \ref{lem:B-diagonal}
that the diagonal part of $\cB$ always gives a positive contribution
to curvature. Secondly, provided the interactions are sufficiently
weak, expressed through a quantitative assumption on deviation of the
jump rates from being homogeneous, we can use the method from the
previous section and techniques developed in \cite{EMT15} to discard a
large fraction of the off-diagonal contributions. Finally, Lemma
\ref{lem:pointwise} will allow us to estimate the remaining
off-diagonal contributions against the corresponding diagonal
contributions.

In the sequel we will use the following properties of the logarithmic
mean, see e.g.~\cite[Lem.~2.2]{EM12}:
\begin{lemma}\label{lem:lm-trick}
  For any $s,t,u,v>0$ we have:
  \begin{align}\label{eq:lm-trick1}
    u\partial_1\theta(u,v) +
    v\partial_2\theta(u,v)~&=~\theta(u,v)\;,\\\label{eq:lm-trick2}
    u\partial_1\theta(s,t) +
    v\partial_2\theta(s,t)~&\geq~\theta(u,v)\;.
  \end{align}
\end{lemma}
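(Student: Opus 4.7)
The plan is to derive both statements from two fundamental features of the logarithmic mean visible from its integral representation: positive $1$-homogeneity and concavity on $\R_+^2$.

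First I would observe that $\theta(\lambda s,\lambda t) = \lambda \theta(s,t)$ for every $\lambda>0$, which is immediate from $\theta(s,t)=\int_0^1 s^\alpha t^{1-\alpha}\dd\alpha$. Differentiating this identity in $\lambda$ at $\lambda=1$ (Euler's relation for positively $1$-homogeneous functions) yields
\begin{align*}
  u\,\partial_1\theta(u,v)+v\,\partial_2\theta(u,v)=\theta(u,v),
\end{align*}
which is \eqref{eq:lm-trick1}.

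For \eqref{eq:lm-trick2}, the key ingredient is that $\theta$ is concave on $\R_+^2$. This follows from the integral representation together with the fact that, for each fixed $\alpha\in[0,1]$, the weighted geometric mean $(s,t)\mapsto s^\alpha t^{1-\alpha}$ is concave on $\R_+^2$; this is a standard fact that can be checked by computing the Hessian or by invoking AM--GM. Since $\theta$ is an integral of concave functions, it is itself concave. Now I would combine concavity with \eqref{eq:lm-trick1} applied at the base point $(s,t)$: concavity gives
\begin{align*}
  \theta(u,v)\leq \theta(s,t)+(u-s)\partial_1\theta(s,t)+(v-t)\partial_2\theta(s,t),
\end{align*}
and using $\theta(s,t)=s\,\partial_1\theta(s,t)+t\,\partial_2\theta(s,t)$ on the right-hand side, the terms involving $s$ and $t$ cancel, leaving exactly $u\,\partial_1\theta(s,t)+v\,\partial_2\theta(s,t)$, which is \eqref{eq:lm-trick2}.

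The only delicate point is verifying the concavity of $(s,t)\mapsto s^\alpha t^{1-\alpha}$ cleanly; this is routine but worth stating explicitly, since everything else is a direct consequence of concavity and $1$-homogeneity. Once those two properties are in hand, both assertions reduce to standard convex-analytic manipulations (Euler's identity and the tangent-plane bound for concave functions).
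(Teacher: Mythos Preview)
Your proof is correct. The paper does not actually supply its own proof of this lemma; it simply cites \cite[Lem.~2.2]{EM12}, so there is nothing in the present paper to compare against beyond noting that your argument---Euler's identity for the $1$-homogeneous function $\theta$ together with the tangent-plane inequality coming from concavity of the geometric means $s^\alpha t^{1-\alpha}$---is the standard one and establishes both claims cleanly.
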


We have the following bounds on the \emph{on-diagonal} part of $\cB$.

\begin{lemma}\label{lem:B-diagonal}
  For all $\rho\in\R_+^\cX$ and $\psi\in\R^\cX$ we have that
  $B(\rho,\psi)(x,\delta,\delta)\geq0$ for all $x\in\cX$ and
  $\delta\in G$ and it holds:
  \begin{align}\label{eq:B-diagonal}
    \sum\limits_{x\in\cX,\delta\in G} B(\rho,\psi)(x,\delta,\delta)c(x,\delta)\pi(x) ~\geq~ 2\cA(\rho,\psi)\;.
  \end{align}
  Let $H$ be a subset of $G$ such that $G = H^{-1} \cup H$.  Then, we have that
  \begin{equation}\label{eq:B-half-diagonal}
  \sum\limits_{x\in \cX,\delta\in H} B(\rho,\psi)(x,\delta,\delta)c(x,\delta)\pi(x) ~\geq~ \frac{1}{2} \cA(\rho,\psi)\;.	
  \end{equation}
\end{lemma}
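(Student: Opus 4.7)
The plan is to expand the on-diagonal term explicitly using Euler's homogeneity identity \eqref{eq:lm-trick1} and handle the two inequalities via that expansion together with reversibility. Writing $a := \rho(x)$ and $b := \rho(\delta x)$, the definition \eqref{eq:Bterms} gives
\[ B(\rho,\psi)(x,\delta,\delta) \;=\; |\nabla_\delta \psi(x)|^2 \Bigl[\theta(a,b) + \tfrac12 \partial_1\theta(a,b)\,(b-a)\Bigr]. \]
Using \eqref{eq:lm-trick1} in the form $\theta(a,b) = a\,\partial_1\theta(a,b) + b\,\partial_2\theta(a,b)$, the bracket rewrites as $\tfrac{a+b}{2}\partial_1\theta(a,b) + b\,\partial_2\theta(a,b)$, which is manifestly nonnegative since $\partial_i\theta \geq 0$. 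Dropping the term $\tfrac{b-a}{2}\partial_1\theta(a,b)$ from the bracket, I obtain the pointwise estimate
\[ B(\rho,\psi)(x,\delta,\delta) \;\geq\; \tfrac12\,|\nabla_\delta \psi(x)|^2\,\hat\rho(x,\delta x) \;\geq\; 0. \]

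For \eqref{eq:B-half-diagonal} I sum this pointwise bound over $x\in\cX$ and $\delta\in H$ weighted by $c(x,\delta)\pi(x)$. Reversibility \eqref{eq:rev} applied to $F(x,\delta) = \one_H(\delta)|\nabla_\delta\psi(x)|^2\hat\rho(x,\delta x)$, together with $|\nabla_{\delta^{-1}}\psi(\delta x)| = |\nabla_\delta\psi(x)|$ and the symmetry $\theta(a,b)=\theta(b,a)$, shows that the sum over $H$ equals the sum over $H^{-1}$. Since $G = H\cup H^{-1}$ and the summands are nonnegative, each of these sums is at least half of the corresponding sum over $G$, which in turn equals $2\cA(\rho,\psi)$. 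Multiplying by the $\tfrac12$ from the pointwise bound yields \eqref{eq:B-half-diagonal}.

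For \eqref{eq:B-diagonal}, I split
\[ \sum_{x,\delta\in G} B(\rho,\psi)(x,\delta,\delta)\, c(x,\delta)\pi(x) \;=\; 2\cA(\rho,\psi) \;+\; S, \]
where $S := \tfrac12\sum_{x,\delta}|\nabla_\delta\psi(x)|^2\hat\rho_1(x,\delta x)\nabla_\delta\rho(x)\,c(x,\delta)\pi(x)$, and it suffices to prove $S\geq0$. I apply \eqref{eq:rev} to the summand of $S$; using $\hat\rho_1(\delta x,x) = \hat\rho_2(x,\delta x)$ and $\nabla_{\delta^{-1}}\rho(\delta x) = -\nabla_\delta\rho(x)$, averaging the original and transformed expressions yields
\[ S \;=\; \tfrac14\sum_{x,\delta}|\nabla_\delta\psi(x)|^2\,(b-a)\bigl[\partial_1\theta(a,b) - \partial_2\theta(a,b)\bigr]\,c(x,\delta)\pi(x). \]

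The main obstacle is thus the scalar monotonicity property of the logarithmic mean, namely $(b-a)\bigl(\partial_1\theta(a,b)-\partial_2\theta(a,b)\bigr)\geq 0$ for all $a,b\geq 0$. I would establish it from the integral representation $\theta(s,t)=\int_0^1 s^\alpha t^{1-\alpha}\,\mathrm{d}\alpha$: by antisymmetry under the swap $a\leftrightarrow b$, it suffices to check the case $a\leq b$, where setting $r=a/b$ reduces the claim to monotonicity of $\varphi(r):=\int_0^1[\alpha r^{\alpha-1}-(1-\alpha)r^\alpha]\,\mathrm{d}\alpha$ on $(0,1]$. An explicit computation gives $\varphi'(r)=\int_0^1\alpha(\alpha-1)r^{\alpha-2}(1+r)\,\mathrm{d}\alpha\leq 0$ since $\alpha(\alpha-1)\leq 0$ on $[0,1]$, while $\varphi(1)=0$; this yields the required inequality, hence $S\geq 0$, completing the proof of \eqref{eq:B-diagonal}.
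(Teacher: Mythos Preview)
Your argument is correct and follows essentially the same strategy as the paper: the pointwise nonnegativity and the bound $B(x,\delta,\delta)\geq \tfrac12|\nabla_\delta\psi|^2\hat\rho(x,\delta x)$ are exactly the paper's \eqref{eq:B-nonnegative}, your proof of \eqref{eq:B-half-diagonal} matches the paper's, and for \eqref{eq:B-diagonal} both you and the paper symmetrize via reversibility and reduce to a scalar inequality for the logarithmic mean.

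Two minor remarks. First, the phrase ``dropping the term $\tfrac{b-a}{2}\partial_1\theta(a,b)$'' is not a valid justification: that term has indefinite sign, so removing it does not give a lower bound. The inequality you claim is nevertheless true and follows immediately from your own rewriting of the bracket as $\tfrac{a+b}{2}\partial_1\theta(a,b)+b\,\partial_2\theta(a,b)$, since subtracting $\tfrac12\theta(a,b)=\tfrac{a}{2}\partial_1\theta+\tfrac{b}{2}\partial_2\theta$ leaves $\tfrac{b}{2}(\partial_1\theta+\partial_2\theta)\geq 0$. Second, the scalar inequality $(b-a)\bigl(\partial_1\theta(a,b)-\partial_2\theta(a,b)\bigr)\geq 0$ that you derive from the integral representation is precisely the concavity estimate \eqref{eq:lm-trick2} (with $u=b$, $v=a$) combined with Euler's identity \eqref{eq:lm-trick1}; the paper invokes \eqref{eq:lm-trick2} directly rather than reproving it, which is a bit shorter, but your derivation is self-contained and equally valid.
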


\begin{proof}
 First, we calculate that
  \begin{align*}
    \sum\limits_{x\in\cX,\delta\in G} B(\rho,\psi)&(x,\delta,\delta)c(x,\delta)c(x,\delta)\pi(x)\\
    =~&\sum\limits_{x,\delta} \frac12\abs{\nabla_\delta\psi}^2(x)c(x,\delta)c(x,\delta)\pi(x)
   \left[\hat\rho_1(x,\delta x)\rho(\delta x) + \hat\rho_2(x,\delta x)\rho(\delta x) + \hat\rho(x,\delta x) \right]\\
     =~&\cA(\rho,\psi) + \sum\limits_{x,\delta} \frac12\abs{\nabla_\delta\psi}^2(x)c(x,\delta)\pi(x)
   \left[\hat\rho_1(x,\delta x)\rho(\delta x) + \hat\rho_2(x,\delta x)\rho(\delta x)\right]\,.
  \end{align*}
  For the second term in the last line we use reversibility, the fact
  that $\partial_1\theta(s,t)=\partial_2\theta(t,s)$ and
  \eqref{eq:lm-trick2} and obtain
  \begin{align*}
    &\sum\limits_{x,\delta} \frac12\abs{\nabla_\delta\psi}^2(x)c(x,\delta)\pi(x)
   \left[\hat\rho_1(x,\delta x)\rho(\delta x) + \hat\rho_2(x,\delta x)\rho(\delta x)\right]\\
   =~&\sum\limits_{x,\delta} \frac14\abs{\nabla_\delta\psi}^2(x)c(x,\delta)\pi(x)
   \Big[\hat\rho_1(x,\delta x)\big(\rho(\delta x)+\rho(x)\big) + \hat\rho_2(x,\delta x)\big(\rho(\delta x)+\rho(x)\big)\Big]\\
  \geq~&\sum\limits_{x,\delta} \frac12\abs{\nabla_\delta\psi}^2(x)c(x,\delta)\pi(x)\hat\rho(x,\delta(x)) ~\geq~ \cA(\rho,\psi)\;.
  \end{align*}
  
  To obtain \eqref{eq:B-half-diagonal}, we first use
  \eqref{eq:lm-trick1} and \eqref{eq:lm-trick2} to see that
 \begin{equation}
 \begin{split}\label{eq:B-nonnegative}
   B(\rho,\psi)(x,\delta,\delta)~&=~\frac12\abs{\nabla_\delta\psi}^2(x)
   \left[\hat\rho_1(x,\delta x)\big(\rho(\delta x)-\rho(x)\big)+2\hat\rho(x,\delta x) \right]\\
    &=~\frac12\abs{\nabla_\delta\psi}^2(x)
\left[\hat\rho_1(x,\delta x)\rho(\delta x) + \hat\rho_2(x,\delta x)\rho(\delta x) + \hat\rho(x,\delta x) \right]\\
	&\geq ~\frac12 |\nabla_\delta \psi|^2(x) \hat\rho(x,\delta x)\;,
 \end{split}
 \end{equation}
 which is non-negative. Then, notice that by symmetrization and
 reversibility, we have that
 \begin{align*}
   \mathcal{A}(\rho,\psi) \leq \sum_{\delta \in H} \sum_{x\in\cX}
   |\nabla_\delta\psi (x)|^2 \hat\rho(x, \delta x) c(x, \delta)
   \pi(x)\;,
 \end{align*}
 which together with \eqref{eq:B-nonnegative} immediately yields
 \eqref{eq:B-half-diagonal}.
\end{proof}

We will use the following to estimate the off-diagonal contributions to
$\cB$. Similar estimates for terms appearing in the study of convex
entropy decay can be found in \cite[(2.33)]{DPP}.

\begin{lemma}\label{lem:pointwise}
  For any $\psi\in\R^\cX$ and $\rho\in\PX$ and $x\in\cX, \delta,\eta\in G$ we have
  \begin{align}\label{eq:pointwise}
    B(x,\delta,\eta) + B(x,\eta,\delta)
    ~\geq~
    - B(\delta x ,\delta^{-1},\delta^{-1}) - B(\eta x,\eta^{-1},\eta^{-1})\;. 
  \end{align}
\end{lemma}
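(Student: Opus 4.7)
The plan is to rewrite everything as a quadratic form in $a := \nabla_\delta\psi(x)$ and $b := \nabla_\eta\psi(x)$, with coefficients depending on $u := \rho(x)$, $v := \rho(\delta x)$, $w := \rho(\eta x)$, and then to complete a perfect square $(a+b)^2$ using the logarithmic-mean inequalities from Lemma~\ref{lem:lm-trick}. Note that no commutation hypothesis $\delta\eta x = \eta\delta x$ is needed here, since the right-hand side only involves the diagonal terms at $\delta x$ and at $\eta x$.

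First I would rewrite each of the four $B$-terms using the symmetry identities $\partial_1\theta(s,t)=\partial_2\theta(t,s)$ and $\theta(s,t)=\theta(t,s)$, together with $\nabla_{\delta^{-1}}\psi(\delta x) = -\nabla_\delta\psi(x)$ and $\nabla_{\delta^{-1}}\rho(\delta x) = -\nabla_\delta\rho(x)$. From~\eqref{eq:Bterms} one then finds
\begin{align*}
B(x,\delta,\eta)+B(x,\eta,\delta) &= \tfrac12 a^2\,\partial_1\theta(u,v)(w-u) + \tfrac12 b^2\,\partial_1\theta(u,w)(v-u) \\
&\quad + ab\bigl[\theta(u,v)+\theta(u,w)\bigr],
\end{align*}
while the two on-diagonal terms evaluated at $\delta x$ and $\eta x$ become
\begin{align*}
B(\delta x,\delta^{-1},\delta^{-1}) &= \tfrac12 a^2\bigl[\partial_2\theta(u,v)(u-v) + 2\theta(u,v)\bigr],\\
B(\eta x,\eta^{-1},\eta^{-1}) &= \tfrac12 b^2\bigl[\partial_2\theta(u,w)(u-w) + 2\theta(u,w)\bigr].
\end{align*}

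Second I would collect the coefficient of $a^2$ in the sum of all four terms. Using $u\,\partial_1\theta(u,v) + v\,\partial_2\theta(u,v) = \theta(u,v)$ from \eqref{eq:lm-trick1}, the $a^2$-coefficient simplifies to
\begin{equation*}
\tfrac12\bigl[w\,\partial_1\theta(u,v) + u\,\partial_2\theta(u,v)\bigr] + \tfrac12\theta(u,v),
\end{equation*}
and then \eqref{eq:lm-trick2} gives the lower bound $\tfrac12[\theta(u,w)+\theta(u,v)]$. The $b^2$-coefficient is bounded by the same expression by symmetry in $(\delta,v)\leftrightarrow(\eta,w)$, while the $ab$-coefficient is exactly $\theta(u,v)+\theta(u,w)$.

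Finally I would observe that
\begin{equation*}
\Sigma \;\geq\; \Bigl(\tfrac12 a^2 + \tfrac12 b^2 + ab\Bigr)\bigl[\theta(u,v)+\theta(u,w)\bigr] \;=\; \tfrac12(a+b)^2\bigl[\hat\rho(x,\delta x)+\hat\rho(x,\eta x)\bigr] \;\geq\; 0,
\end{equation*}
which is precisely the claimed inequality. The only non-trivial step is the use of \eqref{eq:lm-trick2}; everything else is algebraic bookkeeping. I anticipate the main care is in correctly tracking the two-variable partials of $\theta$ under the substitution $y=\delta x$, $\zeta=\delta^{-1}$, since several sign flips conspire to produce the clean $(a+b)^2$ structure.
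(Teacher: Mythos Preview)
Your proof is correct and follows essentially the same approach as the paper: introduce the variables $a,b$ and the three $\rho$-values, reduce to a quadratic form in $(a,b)$, and use \eqref{eq:lm-trick1}--\eqref{eq:lm-trick2} to bound the diagonal coefficients from below by the off-diagonal one. The paper phrases the final step as ``the $2\times2$ matrix is diagonally dominant, hence positive semidefinite,'' whereas you complete the square $(a+b)^2$ explicitly --- these are the same argument in different words.
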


\begin{proof}
  Setting $a=\nabla_\delta\psi(x)$, $b=\nabla_\eta\psi(x) $ as well as
  $s=\rho(x)$, $t=\rho(\delta x)$ and $r=\rho(\eta x)$, it suffices to
  show that
  \begin{align*}
    &a^2\Big[\partial_1\theta(s,t)(r-s)+\partial_1\theta(t,s)(s-t)+2\theta(t,s)\Big]
   +2 ab \Big[\theta(s,t)+\theta(s,r)\Big] \\
   &+ b^2\Big[\partial_1\theta(s,r)(t-s)+\partial_1\theta(r,s)(s-r)+2\theta(r,s)\Big]
~\geq~
   0\;.
  \end{align*}
  We rewrite this last inequality as $a^2M_{11} + 2ab M_{12} + b^2
  M_{22}\geq 0$ with a symmetric $2 \times 2$ matrix $M$. Now, it is
  readily checked, using the fact that
  $\partial_1\theta(u,v)=\partial_2\theta(v,u)$ as well as
  \eqref{eq:lm-trick1}, \eqref{eq:lm-trick2}, that $M$ is diagonally
  dominant and thus non-negative definite.
\end{proof}

\subsubsection{Commutative mapping
  representations}\label{sec:commutative}

Let $(\cX,Q,\pi)$ be a Markov triple and assume that it has a mapping
representation $(G,c)$ that is \emph{commutative} in the sense that
\begin{align*}
  \delta\circ\eta = \eta\circ\delta \qquad\forall \delta,\eta\in G\;.
\end{align*}

A first criterion for entropic Ricci bounds in this setting was given
in \cite{EM12}.
\begin{proposition}[{\cite[Prop.~5.4]{EM12}}]\label{prop:EM-no-interact}
  Assume that 
  \begin{align}\label{eq:rates-homo}
    c(\delta x,\eta) = c(x,\eta)\qquad \forall x,\in\cX,
    \delta,\eta\in G\;.
  \end{align}
  Then, we have $\Ric(\cX,Q,\pi)\geq 0$. If moreover
  $\delta=\delta^{-1}$ holds for all $\delta\in G$, the we have
  $\Ric(\cX,Q,\pi)\geq 2c_*$, where
  \begin{align}\label{eq:min-rate}
    c_*~:=~ \min\{c(x,\delta)~:~x,\delta~\text{with}~c(x,\delta)>0 \}
  \end{align}
  denotes the minimal transition rate.
\end{proposition}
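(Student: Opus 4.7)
The plan is to combine the Bochner-type reformulation in Proposition~\ref{prop:Ric-equiv} with the Bochner--Bakry--\'Emery device of Proposition~\ref{prop:BBE}: pick an admissible function $R$, subtract it from the rate product $c(x,\delta)c(x,\eta)\pi(x)$, and bound the residual. The guiding observation is that in a commutative mapping representation with homogeneous rates the full product $c(x,\delta)c(x,\eta)\pi(x)$ is itself admissible, so for the first part one expects the off-diagonal contributions to $\cB$ to cancel entirely. For the second part the extra involution assumption $\delta=\delta^{-1}$ should let us shave off a small diagonal contribution that is then controlled by the $2\cA$ lower bound of Lemma~\ref{lem:B-diagonal}.

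For the first assertion I would set $R(x,\delta,\eta):=c(x,\delta)c(x,\eta)\pi(x)$ and verify the three conditions of Definition~\ref{def:R}. Condition~(i) is the commutativity of $G$. Condition~(ii) is the manifest symmetry of $R$ in $(\delta,\eta)$. Condition~(iii) combines detailed balance $c(x,\delta)\pi(x)=c(\delta x,\delta^{-1})\pi(\delta x)$ with the homogeneity hypothesis $c(\delta x,\eta)=c(x,\eta)$. Since then $\Gamma\equiv0$, Proposition~\ref{prop:BBE} immediately yields $\cB(\rho,\psi)\geq0$, and so $\Ric(\cX,Q,\pi)\geq0$.

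For the second assertion I would perturb this choice to
\[
	R(x,\delta,\eta)~:=~c(x,\delta)c(x,\eta)\pi(x)-\one_{\delta=\eta}\, c_*\, c(x,\delta)\pi(x),
\]
which remains non-negative because $c_*\leq c(x,\delta)$ whenever $c(x,\delta)>0$. Conditions~(i) and (ii) carry over verbatim, and the only genuinely new check is~(iii) for the subtracted term. This is exactly where the assumption $\delta=\delta^{-1}$ enters, because it guarantees that $\one_{\delta=\eta}=\one_{\delta^{-1}=\eta}$ so that the indicator is preserved under the transformation forced by~(iii); one then closes the identity using reversibility. With admissibility established, $\Gamma(x,\delta,\eta)=\one_{\delta=\eta}\, c_*\, c(x,\delta)\pi(x)$ is supported on the diagonal, and Proposition~\ref{prop:BBE} combined with Lemma~\ref{lem:B-diagonal} gives
\[
	\cB(\rho,\psi)~\geq~ c_*\sum_{x\in\cX,\delta\in G} B(\rho,\psi)(x,\delta,\delta)\,c(x,\delta)\pi(x)~\geq~2c_*\,\cA(\rho,\psi),
\]
which is the desired bound.

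The only delicate point is the verification of property~(iii) for the subtracted diagonal piece in the second step: without the involution assumption the indicator $\one_{\delta=\eta}$ need not agree with $\one_{\delta^{-1}=\eta}$, so the perturbation would have to be symmetrized and the resulting constant would degrade (and one might lose non-negativity of $R$ as well). Everything else is formal manipulation with the $B$-tensor and the logarithmic-mean identities of Lemma~\ref{lem:lm-trick} that are already used inside Lemma~\ref{lem:B-diagonal}, so I expect no further surprises.
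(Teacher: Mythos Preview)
Your proposal is correct and follows essentially the same route as the paper. The paper does not give a standalone proof of this proposition but recovers it as an immediate corollary of Theorem~\ref{thm:perturb}: under homogeneity one has $q=q_*$, so the off-diagonal correction vanishes and $\lambda=c_*$ (or $0$ if some $\delta\neq\delta^{-1}$). Unwinding the proof of Theorem~\ref{thm:perturb} in that degenerate case, one finds precisely your argument: an admissible $R$ that kills the off-diagonal part of $\cB$, leaving a diagonal remainder handled by Lemma~\ref{lem:B-diagonal}. The only cosmetic difference is that the paper's implicit choice sets $R(x,\delta,\delta)=0$ (when $\delta=\delta^{-1}$), yielding $\Gamma(x,\delta,\delta)=c(x,\delta)^2\pi(x)$ and then bounds $c(x,\delta)\geq c_*$ at the end, whereas you subtract $c_* c(x,\delta)\pi(x)$ inside $R$ to obtain $\Gamma(x,\delta,\delta)=c_* c(x,\delta)\pi(x)$ directly; the outcome is identical.
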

Condition \eqref{eq:rates-homo} is a requirement on the transition
rates to be homogeneous. Our main result, Theorem~\ref{thm:perturb}, of this section is a perturbative generalization of this criterion,
when an explicit bound on the non-homogeneity of the transition rates
is given.

To state the result, we use the following notation. Put
$q(x,\delta,\eta)=c(x,\delta)c(x,\eta)\pi(x)$. For $\delta,\eta\in G$
with $\eta\neq\delta,\delta^{-1}$ we
define
\begin{align}\label{eq:qstar1}
   q_*(x,\delta,\eta)~&:=~\min\Big\{q(x,\delta,\eta),q(\delta x,\delta^{-1},\eta),q(\eta x,\delta,\eta^{-1}),q(\delta \eta x,\delta^{-1},\eta^{-1}) \Big\}\;.
\end{align}

\begin{theorem}\label{thm:perturb}
 Assume that 
 \begin{align}\label{eq:ass-rates}
  \lambda~:=~\min_{\stackrel{x\in\cX,\delta\in G}{c(x,\delta)>0}}\left[c(x,\delta)-\one_{\delta\neq\delta^{-1}}c(\delta x,\delta) - \sum_{\eta:\eta\neq\delta,\delta^{-1}}
   \frac{(q-q_*)(\delta x,\delta^{-1},\eta)}{c(x,\delta)\pi(x)}\right]~\geq~0\;,
 \end{align}
 Then, we have $\Ric(\cX,Q,\pi)\geq 2\lambda$.
 
 Moreover, assume that there are disjoint subsets $H_1,H_2$ of $G$ such that $H_1\cap H_2=\emptyset$
 and $H_i\cup H_i^{-1}=G$ for $i=1,2$. Set
 \begin{align}\label{eq:ass-rates2}
   \lambda_i~:=~\min_{\stackrel{x\in\cX,\delta\in H_i}{c(x,\delta)>0}}\left[c(x,\delta)-\one_{\delta\neq\delta^{-1}}c(\delta x,\delta) - \sum_{\eta:\eta\neq\delta,\delta^{-1}}
   \frac{(q-q_*)(\delta x,\delta^{-1},\eta)}{c(x,\delta)\pi(x)}\right]\;.
 \end{align}
 Then, we also have $\Ric(\cX,Q,\pi)\geq \frac{1}{2}(\lambda_1+\lambda_2)$.
\end{theorem}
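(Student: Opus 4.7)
By Proposition~\ref{prop:Ric-equiv}, it suffices to prove $\cB(\rho,\psi)\geq 2\lambda\,\cA(\rho,\psi)$ for all $\rho\in\PXs$ and $\psi\in\R^\cX$. I expand $\cB$ via \eqref{eq:T1-mappingform} as a triple sum over $(x,\delta,\eta)$ and partition the $(\delta,\eta)$-indices into three regimes: the \emph{diagonal} $\eta=\delta$, the \emph{inverse-diagonal} $\eta=\delta^{-1}\neq\delta$, and the \emph{strictly off-diagonal} $\eta\notin\{\delta,\delta^{-1}\}$. Only the diagonal part contributes positively to curvature in a direct way (via Lemma~\ref{lem:B-diagonal}); the other two regimes must be absorbed into it at a cost controlled by the bracket in \eqref{eq:ass-rates}.

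For the strictly off-diagonal regime I invoke Proposition~\ref{prop:BBE} with $R(x,\delta,\eta)=q_*(x,\delta,\eta)$ when $\eta\neq\delta,\delta^{-1}$ and $R(x,\delta,\eta)=0$ otherwise. Admissibility in the sense of Definition~\ref{def:R} must be verified, and this is where commutativity enters: (i) is immediate; (ii) is the manifest symmetry of $q_*$ in its last two arguments; and (iii), namely $q_*(\delta x,\delta^{-1},\eta)=q_*(x,\delta,\eta)$, amounts to observing that the four-fold minimum in \eqref{eq:qstar1} is invariant under the substitution $(x,\delta)\mapsto(\delta x,\delta^{-1})$ because $\delta^{-1}\eta\delta x=\eta x$ and $\eta\delta x=\delta\eta x$. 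This reduces $\cB$ to $\cB_{\mathrm{diag}}+\cB_{\mathrm{inv}}+\sum_{\eta\neq\delta,\delta^{-1}}(q-q_*)B$. I then symmetrize the inverse-diagonal contribution under $\delta\leftrightarrow\delta^{-1}$ and apply Lemma~\ref{lem:pointwise} with $\eta=\delta^{-1}$ to bound $B(x,\delta,\delta^{-1})+B(x,\delta^{-1},\delta)$ from below by $-B(\delta x,\delta^{-1},\delta^{-1})-B(\delta^{-1}x,\delta,\delta)$, and apply the same symmetrize-plus-Lemma~\ref{lem:pointwise} procedure to the residual $(q-q_*)$-weighted sum.

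At this point every negative contribution is of the form $B(\alpha y,\alpha^{-1},\alpha^{-1})$ or $B(\alpha y,\alpha,\alpha)$ at a shifted point. The substitutions $y=\delta x$, $y=\eta x$, combined with reversibility $c(x,\delta)\pi(x)=c(\delta x,\delta^{-1})\pi(\delta x)$ and the admissibility identity for $q_*$ from the previous step, pull every term back to $B(x,\delta,\delta)$. The two parallel contributions arising from $B(\delta x,\ldots)$ and $B(\eta x,\ldots)$ in Lemma~\ref{lem:pointwise} coalesce (after relabeling) into a single sum featuring exactly $(q-q_*)(\delta x,\delta^{-1},\eta)$, so the factor $1/2$ from the pointwise inequality is cancelled by the factor $2$ from the coalescence. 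Collecting, the coefficient of $c(x,\delta)\pi(x)\,B(x,\delta,\delta)$ in the resulting bound equals precisely the bracket in \eqref{eq:ass-rates}, hence is at least $\lambda$ (wherever $c(x,\delta)>0$, which is all that matters). Applying the first part of Lemma~\ref{lem:B-diagonal}, $\sum_{x,\delta}c(x,\delta)\pi(x)\,B(x,\delta,\delta)\geq 2\,\cA(\rho,\psi)$, then yields $\cB\geq 2\lambda\,\cA$.

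For the second statement, the same argument produces $\cB\geq\sum_{x,\delta}c(x,\delta)\pi(x)\,B(x,\delta,\delta)\cdot[\mathrm{bracket}(x,\delta)]$ \emph{before} Lemma~\ref{lem:B-diagonal} is invoked. The hypotheses $H_1\cap H_2=\emptyset$ and $H_i\cup H_i^{-1}=G$ force a partition $G=H_1\sqcup H_2$ (any $\delta\in G\setminus(H_1\cup H_2)$ would satisfy $\delta^{-1}\in H_1\cap H_2$), so the outer sum splits cleanly and the bracket is at least $\lambda_i$ on $H_i$; two applications of the half-diagonal estimate \eqref{eq:B-half-diagonal} then give $\cB\geq\tfrac12(\lambda_1+\lambda_2)\,\cA$. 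The main obstacle throughout is the careful bookkeeping of the reparameterizations in paragraph three: one must check that the \emph{two} distinct pointwise lower bounds coming from Lemma~\ref{lem:pointwise} produce, after the change of variables, the \emph{same} expression, so that no factor $2$ is lost in front of $\lambda$ in \eqref{eq:ass-rates}.
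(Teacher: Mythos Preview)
Your argument is correct and follows essentially the same route as the paper's proof: apply Proposition~\ref{prop:BBE} with $R=q_*$ on the strictly off-diagonal part, use Lemma~\ref{lem:pointwise} to convert the remaining off-diagonal contributions into diagonal ones, reindex, and finish with Lemma~\ref{lem:B-diagonal}. The only difference is bookkeeping: the paper also loads the \emph{inverse-diagonal} and part of the \emph{diagonal} into $R$ (setting $R(x,\delta,\delta^{-1})=q(x,\delta,\delta^{-1})$ and $R(x,\delta,\delta)=q(\delta x,\delta^{-1},\delta)$ when $\delta\neq\delta^{-1}$), so that after Proposition~\ref{prop:BBE} the inverse-diagonal $\Gamma$ vanishes identically; you instead keep $R=0$ there and handle the inverse-diagonal directly via Lemma~\ref{lem:pointwise}. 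Both choices are admissible and both produce exactly the same bracket $c(x,\delta)-\one_{\delta\neq\delta^{-1}}c(\delta x,\delta)-\sum_{\eta\neq\delta,\delta^{-1}}(q-q_*)(\delta x,\delta^{-1},\eta)/(c(x,\delta)\pi(x))$ in front of $B(x,\delta,\delta)c(x,\delta)\pi(x)$. Your treatment of the second statement, including the observation that the hypotheses force $G=H_1\sqcup H_2$, is also correct; note only that invoking \eqref{eq:B-half-diagonal} after factoring out $\lambda_i$ tacitly uses $\lambda_i\geq 0$, which indeed follows from the standing assumption $\lambda\geq 0$ since $\lambda=\min(\lambda_1,\lambda_2)$ once $G=H_1\sqcup H_2$.
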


Note that we recover Proposition \ref{prop:EM-no-interact} as an
immediate consequence: In this situation we have $q-q_*\equiv0$ and
hence $\lambda=0$ or $2c_*$, depending on whether there is $\delta$ with
$\delta\neq\delta^{-1}$ or not.

\begin{proof}
  To prove the first statement, we have to show that for any $\rho$
  and $\psi$\,,
\begin{align}\label{eq:Ric_Ising-BA}
  \cB(\rho,\psi)~\geq~ 2\lambda\cA(\rho,\psi)\;.
\end{align}
Define a function $R:\cX\times G\times G\to\R_+$ as follows.
For $\delta,\eta\in G$ with $\eta\neq\delta,\delta^{-1}$ set
\begin{align*}
  R(x,\delta,\eta)~=~ q_*(x,\delta,\eta)\;,
\end{align*}
and for $\delta\in G$ with $\delta\neq \delta^{-1}$ set
\begin{align*}
 R(x,\delta,\delta^{-1})~&=~ q(x,\delta,\delta^{-1})\;,\\ 
 R(x,\delta,\delta)~&=~ c(\delta x, \delta) c(\delta x,\delta^{-1})\pi(\delta x) = q(x,\delta,\delta)\frac{c(\delta x,\delta)}{c(x,\delta)}\;.
 \end{align*}
 It is readily checked that $R$ is admissible in the sense of
 Definition \ref{def:R}. Note that the assumption on $\lambda$ guarantees, in
 particular, that $c(\delta x,\delta)\leq c(x,\delta)$ when $\delta \neq \delta^{-1}$. Thus, we have
 that $\Gamma(x,\delta,\eta)=q(x,\delta,\eta)-R(x,\delta,\eta)\geq 0$
 for all $x,\delta,\eta$. Note further that in the case $\delta\neq
 \delta^{-1}$, we have $\Gamma(x,\delta,\delta^{-1})=0$. Let us write
 for brevity $B(x,\delta,\eta):=B(\rho,\psi)(x,\delta,\eta)$. Using
 Proposition \ref{prop:BBE} and Lemma \ref{lem:pointwise} we now
 obtain
\begin{align*}
  \cB(\rho,\psi)
   &~\geq~
  \sum\limits_{x\in\cX,\delta,\eta\in G}\Gamma(x,\delta,\eta)B(x,\delta,\eta)\\
  &=~
   \sum\limits_{x,\delta}B(x,\delta,\delta)\Gamma(x,\delta,\delta) +  \frac12\sum\limits_{x,\delta\neq \eta}\big[B(x,\delta,\eta)+B(x,\eta,\delta)\big]\Gamma(x,\delta,\eta)\\
  &\geq~
   \sum\limits_{x,\delta}B(x,\delta,\delta)\Gamma(x,\delta,\delta) -  \frac12\sum\limits_{x,\delta\neq \eta}\big[B(\delta x,\delta^{-1},\delta^{-1})+B(\eta x,\eta^{-1},\eta^{-1})\big]\Gamma(x,\delta,\eta)\\
  &=~
   \sum\limits_{x,\delta}B(x,\delta,\delta)\Gamma(x,\delta,\delta) - \sum\limits_{x,\delta\neq \eta}B(\delta x,\delta^{-1},\delta^{-1})\Gamma(x,\delta,\eta)\;.
\end{align*}
Here we have also used in the second inequality the fact that
$B(x,\delta,\delta)\geq0$, by Lemma~\ref{lem:B-diagonal}. We can
further reorganize this expression to obtain
\begin{align*}
 \cB(\rho,\psi) &\geq~
   \sum\limits_{x,\delta}B(x,\delta,\delta)\left[\Gamma(x,\delta,\delta) - \sum\limits_{\eta:\eta\neq\delta^{-1}}\Gamma(\delta x,\delta^{-1},\eta)\right]\\
  &=~ \sum\limits_{x,\delta}B(x,\delta,\delta)\left[q(x,\delta,\delta) - \one_{\{\delta\neq\delta^{-1}\}}q(\delta x,\delta^{-1},\delta)-\sum\limits_{\eta:\eta\neq\delta,\delta^{-1}}(q-q_*)(\delta x,\delta^{-1},\eta)\right]\\
&\geq~ \sum\limits_{x,\delta}B(x,\delta,\delta)c(x,\delta)\pi(x)\left[c(x,\delta) - \one_{\{\delta\neq\delta^{-1}\}}c(\delta x,\delta)-\sum\limits_{\eta:\eta\neq\delta,\delta^{-1}}\frac{(q-q_*)(\delta x,\delta^{-1},\eta)}{c(x,\delta)\pi(x)}\right]\;.
\end{align*}
Now, invoking \eqref{eq:ass-rates} and \eqref{eq:B-diagonal} from
Lemma \ref{lem:B-diagonal} finishes the proof of statement i).

To obtain the second statement, we proceed in the same way. In the
last step, we note that by \eqref{eq:ass-rates} each summand is
non-negative. Thus we obtain the estimate
\begin{align*}
  \cB(\rho,\psi) 
  \geq
 \sum\limits_{x,\delta\in H_1\cup H_2}B(x,\delta,\delta)c(x,\delta)\pi(x)\left[c(x,\delta) - \one_{\{\delta\neq\delta^{-1}\}}c(\delta x,\delta)-\sum\limits_{\eta:\eta\neq\delta,\delta^{-1}}\frac{(q-q_*)(\delta x,\delta^{-1},\eta)}{c(x,\delta)\pi(x)}\right]\;,
\end{align*}
and we conclude by invoking \eqref{eq:ass-rates2} and
\eqref{eq:B-half-diagonal}.
\end{proof}

In Section~\ref{sec:examples}, we will apply the first part of
Theorem~\ref{thm:perturb} to derive lower Ricci bounds for the Glauber
dynamics of the Ising model. The second part of
Theorem~\ref{thm:perturb} is applied to derive lower Ricci bounds for
the hard-core model.

The following corollary will illustrate that our method allows to
obtain rough entropic Ricci curvature bounds under very explicit and
easy-to-check conditions on the transition rates. In practice,
however, a direct application of Theorem \ref{thm:perturb} will give
sharper results.

Assume for simplicity that $\delta=\delta^{-1}$ for all $\delta\in G$
and set, using the convention that $0/0=0$,
\begin{align*}
  N &:= \#\Big\{ \{\delta,\eta\}\subset G :  c(\delta x,\eta) \neq c(x,\eta) \text{ for some } x\in\cX   \Big\}\;,\\
  \alpha &:= \max\Big\{\log \frac{c(\delta x,\eta)}{c(x,\eta)} : x\in\cX,\delta,\eta\in G \text{ with } c(x,\eta)>0\Big\}\;,\\
  \beta &:= \max\Big\{\frac{c(x,\eta)}{c(x,\delta)} : x\in\cX,\delta,\eta\in G \text{ with } c(x,\delta)>0\Big\}\;.
\end{align*}

\begin{corollary}\label{cor:perturb}
  With the above notation, assume that
  \begin{align*}
  \varepsilon:=  \beta N \Big(e^{2\alpha}-1\Big)\leq 1\;.
  \end{align*}
  Then, we have $\Ric(\cX,Q,\pi)\geq (1-\eps)2c_*$.
\end{corollary}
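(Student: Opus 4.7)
The plan is to deduce this from Theorem~\ref{thm:perturb} by proving the hypothesis $\lambda \geq (1-\varepsilon)c_*$. Since $\delta=\delta^{-1}$ for all $\delta \in G$, the indicator $\one_{\delta\neq\delta^{-1}}$ in \eqref{eq:ass-rates} is identically zero, so what remains is to establish that
\[
S(x,\delta) \,:=\, \sum_{\eta: \eta \neq \delta} \frac{(q-q_*)(\delta x,\delta,\eta)}{c(x,\delta)\pi(x)} \,\leq\, \varepsilon\, c(x,\delta)
\]
for every $(x,\delta)$ with $c(x,\delta)>0$: the bracket in \eqref{eq:ass-rates} is then at least $(1-\varepsilon)c(x,\delta) \geq (1-\varepsilon)c_* \geq 0$ (using $\varepsilon \leq 1$), and Theorem~\ref{thm:perturb} yields the stated bound $\Ric(\cX,Q,\pi)\geq 2(1-\varepsilon)c_*$.

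To bound an individual summand, I would first exploit reversibility $c(y,\delta)\pi(y)=c(\delta y,\delta)\pi(\delta y)$ together with commutativity $\delta\eta x=\eta\delta x$ to write each of the four $q$-values in the minimum defining $q_*(\delta x,\delta,\eta)$ in \eqref{eq:qstar1} as $c(x,\delta)\pi(x)$ multiplied by at most two ratios of the form $c(\delta z,\cdot)/c(z,\cdot)$ or $c(\eta z,\cdot)/c(z,\cdot)$. By the definition of $\alpha$, each such ratio lies in $[e^{-\alpha},e^{\alpha}]$, so all four $q$-values are within a factor $e^{2\alpha}$ of $q(\delta x,\delta,\eta)$; in particular $q_*(\delta x,\delta,\eta)\geq e^{-2\alpha} q(\delta x,\delta,\eta)$ and hence
\[
(q-q_*)(\delta x,\delta,\eta) \,\leq\, (1-e^{-2\alpha})\, q(\delta x,\delta,\eta).
\]
Applying reversibility once more to rewrite $q(\delta x,\delta,\eta)/[c(x,\delta)\pi(x)] = c(\delta x,\eta)$, and using the definitions of $\beta$ and $\alpha$ to bound $c(\delta x,\eta) \leq \beta\, c(\delta x,\delta) \leq \beta e^\alpha c(x,\delta)$, together with the elementary inequality $(1-e^{-2\alpha})e^\alpha = (e^{2\alpha}-1)/e^\alpha \leq e^{2\alpha}-1$ (valid for $\alpha\geq 0$), each nonzero summand is at most $\beta(e^{2\alpha}-1)\, c(x,\delta)$.

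The remaining step is to count nonzero summands. The same chain of reversibility and commutativity identities shows that the four quantities inside $q_*(\delta x,\delta,\eta)$ all coincide as soon as both $c(\cdot,\eta)$ is $\delta$-invariant and $c(\cdot,\delta)$ is $\eta$-invariant on the orbit $\{x,\delta x,\eta x,\delta\eta x\}$, in which case $q=q_*$ and the summand vanishes. Contrapositively, every nonzero summand forces the unordered pair $\{\delta,\eta\}$ to be counted in the set defining $N$, so at fixed $\delta$ there are at most $N$ such summands. Combining with the per-summand bound gives $S(x,\delta) \leq \beta N (e^{2\alpha}-1)\, c(x,\delta) = \varepsilon\, c(x,\delta)$, as required. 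The main subtlety is securing the uniform comparison $q_* \geq e^{-2\alpha} q$, which requires chasing several applications of detailed balance through the commutativity relation; once this is in place, the combinatorial counting step is essentially bookkeeping.
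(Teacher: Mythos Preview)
Your proposal is correct and follows essentially the same approach as the paper: reduce to Theorem~\ref{thm:perturb}, bound each summand by $\beta(e^{2\alpha}-1)$ using detailed balance and the definitions of $\alpha,\beta$, and count at most $N$ nonzero summands via the definition of $N$. The only cosmetic difference is the order of the estimates: the paper bounds $(q-q_*)(\delta x,\delta,\eta)/q(x,\delta,\delta)\leq \beta\,(q-q_*)/q(x,\delta,\eta)\leq \beta\,(q-q_*)/q_*\leq \beta(e^{2\alpha}-1)$ directly, whereas you first bound $q-q_*\leq (1-e^{-2\alpha})q$, then pass through $c(\delta x,\eta)\leq \beta e^{\alpha}c(x,\delta)$ and close with the elementary inequality $(1-e^{-2\alpha})e^{\alpha}\leq e^{2\alpha}-1$; the paper's route avoids this last auxiliary step but the substance is identical.
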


\begin{proof}
  The result will follow from Theorem \ref{thm:perturb} by estimating
  the left hand side of \eqref{eq:ass-rates}. First, note that
  \begin{align*}
    \frac{q(x,\delta,\eta)}{q(x,\delta,\delta)}=\frac{c(x,\eta)}{c(x,\delta)}\leq \beta\;.
  \end{align*}
  Now, if $\delta,\eta$ are such that $c(\delta x,\eta)=c(x,\eta)$ and
  $c(\eta x,\delta)=c(x,\delta)$ for all $x$, then, using the detailed-balance condition, we infer that $q(x,\delta,\eta) =
  q_*(x,\delta,\eta)$. Otherwise, we have the bound
  $q(x,\delta,\eta)\leq e^{2\alpha}q_*(x,\delta,\eta)$. Note also that
  by construction we have that $q_*(x,\delta,\eta)=q_*(\delta
  x,\delta^{-1},\eta)$. This implies that
  \begin{align*}
   \frac{(q-q_*)(\delta x,\delta^{-1},\eta)}{q(x,\delta,\delta)}\leq \beta \frac{(q-q_*)(\delta x,\delta^{-1},\eta)}{q(x,\delta,\eta)}
   \leq \beta \frac{(q-q_*)(\delta x,\delta^{-1},\eta)}{q_*(x,\delta,\eta)}\leq \beta\Big(e^{2\alpha}-1\Big)\;.
  \end{align*}
  From this we obtain that
  \begin{align*}
    \lambda = \min_{x,\delta}c(x,\delta)\Big[1-\sum_{\eta\neq\delta}\frac{(q-q_*)(\delta x,\delta,\eta)}{q(x,\delta,\delta)}\Big]
            \geq c_*\Big[1-N\beta\Big(e^{2\alpha}-1\Big)\Big] = c_*(1-\eps)\;.
  \end{align*}
\end{proof}

\subsubsection{Conjugacy-invariant Cayley graphs}
\label{sec:non-commutativ}

Here we establish entropic Ricci bounds for a class of Markov chains
with not necessarily commutative mapping representation. Namely, we
consider random walks on weighted conjugacy-invariant Cayley graphs.

Let $\G$ be a finite group and let $G$ be a set of generators for
$\G$, i.e.~every $g\in\G$ can be written as a word
$g=\delta_1\delta_2\cdots\delta_n$ for suitable $\delta_i\in G$. We
assume that $G$ is 
\begin{itemize}
\item[(i)] closed under taking inverse: $\delta^{-1}\in G$ for all $\delta\in G$,
\item[(ii)] conjugacy-invariant: $g\delta g^{-1}\in G$ for all
  $\delta\in G,g\in\G$.
\end{itemize}

The Cayley graph associated to the generating set $G$ is the
(directed) graph with vertex set $\G$ and edge set $E=\{(x,y):
x,y\in\G,\ x^{-1}y\in G\}$. We consider a natural irreducible Markov
dynamics on the group $\G$ by choosing a function $c:\G\times G\to
(0,\infty)$ and considering the mapping representation $(G,c)$. The
associated Markov triple $(\G,Q,\pi)$ is the natural random walk on
the weighted directed graph $(\G,E)$, where $c(x,\delta)$ is
considered as the weight of the edge $(x,\delta x)$.

We have the following perturbative Ricci bound in the present situation:
\begin{theorem}\label{thm:Cayley-Ricci}
  Let us set
  \begin{align*}
\alpha_1 &:= \max\Big\{\log \frac{c(\delta x,\delta)}{c(x,\delta)} : x\in\G,\delta\in G \text{ with } \delta\neq\delta^{-1}, c(x,\delta)>0\Big\}\;,\\
    \alpha_2 &:= \max\Big\{\log \frac{c(\delta x,\eta)}{c(x,\eta)},\log \frac{c(\delta x,\delta\eta\delta^{-1})}{c(x,\eta)} : x\in\G,\delta,\eta\in G \text{ with } \delta\neq \eta,\eta^{-1}, c(x,\eta)>0\Big\}\;,\\
    \beta &:= \max\Big\{ \frac{c(x,\eta)}{c(x,\delta)} :
    x\in\G,\delta,\eta\in G \text{ with } c(x,\delta)>0\Big\}\;,
  \end{align*}
  and assume that
  $\eps:=e^{\alpha_1}+\beta(|G|-2)\Big(e^{2\alpha_2}-1\Big)\leq1$. Then we
  have 
  \begin{align*}
    \Ric(\G,Q,\pi)\geq (1-\eps)2c_*\;,
  \end{align*}
  where $c_*$ is the minimal transition rate defined in
  \eqref{eq:min-rate}.

  Moreover, if we assume that $\delta=\delta^{-1}$ for all $\delta\in
  G$ and that $\eps':=\beta(|G|-1)\Big(e^{2\alpha_2}-1\Big)\leq1$, we then
  have the improved bound $\Ric(\G,Q,\pi)\geq (1-\eps')2c_*$.
\end{theorem}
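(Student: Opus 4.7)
The plan is to follow the blueprint of Theorem~\ref{thm:perturb}, replacing the commutativity hypothesis $\delta\eta x=\eta\delta x$ throughout by the conjugation identity
\begin{align*}
  \delta\eta x ~=~ \eta'\delta x\,,\qquad \eta':=\delta\eta\delta^{-1}\in G\,,
\end{align*}
which is available because $G$ is conjugacy-invariant. The proof will proceed in three stages: (a) a twisted analogue of the Bochner--Bakry--\'Emery reorganization of Proposition~\ref{prop:BBE}; (b) the choice of an admissible function $R$ together with the pointwise estimates of Lemmas~\ref{lem:B-diagonal} and~\ref{lem:pointwise}; and (c) a quantitative bound on the deficit expressed in terms of $\alpha_1,\alpha_2,\beta$.

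For stage~(a), I would modify Definition~\ref{def:R} by replacing~(i) with the conjugated identity above (which is automatic in the Cayley graph setting), and by recasting the symmetry conditions~(ii) and~(iii) as invariance of $R$ under the appropriate re-indexings $\eta\mapsto\delta\eta\delta^{-1}$ and $\delta\mapsto\delta^{-1}$. The proof of Proposition~\ref{prop:BBE} then carries over line-by-line, except that the commutativity-based identity $\nabla_\eta\psi(x)-\nabla_\eta\psi(\delta x)=\nabla_\delta\psi(x)-\nabla_\delta\psi(\eta x)$ must be replaced by its twisted analogue
\begin{align*}
  \nabla_\eta\psi(x)-\nabla_{\eta'}\psi(\delta x) ~=~ \nabla_\delta\psi(x)-\nabla_\delta\psi(\eta x)\,,
\end{align*}
where both sides equal $\psi(\delta x)+\psi(\eta x)-\psi(x)-\psi(\delta\eta x)$. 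The final sum-of-squares expression showing non-negativity is preserved under this substitution.

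For stage~(b), mimicking~\eqref{eq:qstar1}, I would define, for $\eta\neq\delta,\delta^{-1}$,
\begin{align*}
  q_*(x,\delta,\eta) := \min\bigl\{q(x,\delta,\eta),\,q(\delta x,\delta^{-1},\eta'),\,q(\eta x,\delta,\eta^{-1}),\,q(\delta\eta x,\delta^{-1},(\eta')^{-1})\bigr\}\,,
\end{align*}
set $R(x,\delta,\eta)=q_*(x,\delta,\eta)$, and choose $R(x,\delta,\delta^{\pm1})$ exactly as in the proof of Theorem~\ref{thm:perturb}. Then $\Gamma:=q-R\geq0$, and combining the twisted Bochner--Bakry--\'Emery estimate with Lemma~\ref{lem:pointwise} and the positivity of the diagonal $B$-terms (Lemma~\ref{lem:B-diagonal}) yields, exactly as in the commutative proof,
\begin{align*}
  \cB(\rho,\psi)\geq \sum_{x,\delta}B(x,\delta,\delta)\,c(x,\delta)\pi(x)\left[c(x,\delta)-\one_{\delta\neq\delta^{-1}}c(\delta x,\delta)-\sum_{\eta\neq\delta,\delta^{-1}}\frac{(q-q_*)(\delta x,\delta^{-1},\eta)}{c(x,\delta)\pi(x)}\right]\,.
\end{align*}

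For stage~(c), arguing as in Corollary~\ref{cor:perturb}, I would use detailed balance together with the bounds $c(\delta x,\delta)\leq e^{\alpha_1}c(x,\delta)$ and $q(\delta x,\delta^{-1},\eta)\leq e^{2\alpha_2}q_*(\delta x,\delta^{-1},\eta)$ (valid because, by construction, $q_*$ is a minimum over the four relatives of $q$ reached via the passes $\delta\mapsto\delta^{-1}$, $\eta\mapsto\eta^{-1}$, and $\eta\mapsto\delta\eta\delta^{-1}$), together with $q(\delta x,\delta^{-1},\eta)\leq\beta\,q(\delta x,\delta^{-1},\delta^{-1})$, to estimate each off-diagonal summand by $\beta(e^{2\alpha_2}-1)$. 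There are at most $|G|-2$ such summands, so invoking~\eqref{eq:B-diagonal} completes the proof of the first bound $\Ric\geq(1-\varepsilon)2c_*$. For the improved bound, the hypothesis $\delta=\delta^{-1}$ kills the indicator $\one_{\delta\neq\delta^{-1}}$ (so the $e^{\alpha_1}$ term disappears), while the condition $\eta\neq\delta,\delta^{-1}=\delta$ leaves $|G|-1$ off-diagonal summands, producing $\varepsilon'$ in place of $\varepsilon$. The main obstacle is stage~(a): the symmetrization steps in the proof of Proposition~\ref{prop:BBE} must be carefully re-run with the twisted identity so that the final non-negative sum-of-squares structure is preserved, together with verifying that the $R$ chosen in stage~(b) satisfies the correspondingly twisted admissibility conditions.
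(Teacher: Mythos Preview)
Your route differs from the paper's, and there is a genuine gap in stage~(b) that you have not addressed. The step of Theorem~\ref{thm:perturb} you wish to copy is the symmetrization
\[
\sum_{x,\;\delta\neq\eta}\Gamma(x,\delta,\eta)B(x,\delta,\eta)=\tfrac12\sum_{x,\;\delta\neq\eta}\bigl[B(x,\delta,\eta)+B(x,\eta,\delta)\bigr]\Gamma(x,\delta,\eta)\,,
\]
which is what allows Lemma~\ref{lem:pointwise} to be invoked, and which rests on $\Gamma(x,\delta,\eta)=\Gamma(x,\eta,\delta)$. Your twisted $q_*$ does \emph{not} have this symmetry when $\delta\eta\neq\eta\delta$: by construction $q_*(x,\delta,\eta)$ is the minimum of the four $q$-values on the square with opposite vertex $\delta\eta x$, whereas $q_*(x,\eta,\delta)$ is built from the \emph{different} square with opposite vertex $\eta\delta x$; for instance the corner $\delta x$ contributes $q(\delta x,\delta^{-1},\delta\eta\delta^{-1})$ in the first and $q(\delta x,\delta^{-1},\eta)$ in the second. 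Forcing symmetry by taking the minimum over both squares would in turn destroy the twisted invariance $q_*(x,\delta,\eta)=q_*(\delta x,\delta^{-1},\eta')$ you need for stage~(a), since the pair of squares through $(\delta x,\delta^{-1},\eta')$ is not the same pair. Ironically, stage~(a)---which you flag as the main obstacle---actually goes through: your $q_*$ does satisfy the two invariances (under $(x,\delta,\eta)\mapsto(\delta x,\delta^{-1},\eta')$ and $(x,\delta,\eta)\mapsto(\eta x,\delta,\eta^{-1})$) used in the two symmetrizations of Proposition~\ref{prop:BBE}, and the sum-of-squares conclusion survives.

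The paper sidesteps this tension by a different device. It writes $\cB=\cB^{\mathrm{diag}}+\cB^{\mathrm{off},1}+\cB^{\mathrm{off},2}$, handles $\cB^{\mathrm{off},1}$ (the $\eta=\delta^{-1}$ part) directly via Lemma~\ref{lem:pointwise} and the $\alpha_1$-bound, and then decomposes $\cB^{\mathrm{off},2}$ into contributions from individual $4$-cycles (``squares'') of the Cayley graph. Inside each square one installs a \emph{fresh} mapping representation on four points that is commutative and involutive, so the commutative machinery of Theorem~\ref{thm:perturb} and Corollary~\ref{cor:perturb} applies verbatim to give $\cB^{\mathrm{off}}_\Box\geq-\beta(e^{2\alpha_2}-1)\cB^{\mathrm{diag}}_\Box$. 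A counting argument---each diagonal term $B(x,\delta,\delta)q(x,\delta,\delta)$ lies in at most $|G|-2$ (resp.\ $|G|-1$) squares, with non-commuting squares counted with weight $1/2$---then reassembles these local estimates into the global one.
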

In particular, we obtain a Ricci bound for the simple random walk on a
conjugacy-invariant Cayley graph. In this case, we have for some
constant $c>0$ that
\begin{align*}
  c(x,\delta) = c \qquad\forall x\in G, \delta\in G\;.
\end{align*}
\begin{corollary}\label{cor:srw}
  The simple random walk on a conjugacy-invariant Cayley graph
  satisfies $\Ric(\G,Q,\pi)\geq 0$. If $\delta=\delta^{-1}$ holds for all
  $\delta\in G$, then we even have that $\Ric(\G,Q,\pi)\geq 2c$.
\end{corollary}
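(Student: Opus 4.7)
The plan is to obtain Corollary \ref{cor:srw} as an immediate application of Theorem \ref{thm:Cayley-Ricci}, specialized to the constant rate $c(x,\delta)=c$ for all $x\in\G,\delta\in G$. The strategy is simply to compute the three perturbation parameters $\alpha_1,\alpha_2,\beta$ for this choice of rates, verify that the smallness hypothesis is (trivially) satisfied, and read off the resulting curvature bound.

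First I would evaluate each of the quantities entering the statement of Theorem \ref{thm:Cayley-Ricci}. Since $c(x,\delta)\equiv c$ is independent of $x$ and $\delta$, every ratio appearing in the definitions of $\alpha_1$, $\alpha_2$ and $\beta$ equals $1$, so
\[
  \alpha_1 = \alpha_2 = 0\;, \qquad \beta = 1\;.
\]
Consequently $e^{2\alpha_2}-1=0$, so the perturbation parameter
\[
  \eps ~=~ e^{\alpha_1} + \beta(|G|-2)\bigl(e^{2\alpha_2}-1\bigr) ~=~ 1 + (|G|-2)\cdot 0 ~=~ 1\;,
\]
which (marginally) satisfies the hypothesis $\eps\leq 1$. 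The minimal transition rate is $c_*=c$, so Theorem \ref{thm:Cayley-Ricci} yields $\Ric(\G,Q,\pi)\geq (1-\eps)\cdot 2c_* = 0$, proving the first claim.

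For the second claim, under the additional assumption $\delta=\delta^{-1}$ for all $\delta\in G$, I would invoke the improved bound of Theorem \ref{thm:Cayley-Ricci}. Here the relevant perturbation parameter is
\[
  \eps' ~=~ \beta(|G|-1)\bigl(e^{2\alpha_2}-1\bigr) ~=~ (|G|-1)\cdot 0 ~=~ 0\;,
\]
which trivially satisfies $\eps'\leq 1$. The theorem then gives $\Ric(\G,Q,\pi)\geq (1-\eps')\cdot 2c_* = 2c$, as desired.

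There is no real obstacle in this argument: the corollary is a direct substitution into Theorem \ref{thm:Cayley-Ricci}. The only point worth noting is that in the non-involutive case the first bound is tight in the sense that the perturbative method, as formulated, can only conclude nonnegativity ($\eps=1$); obtaining a strictly positive bound in that generality would require separately exploiting the $e^{\alpha_1}$-contribution, which vanishes precisely when all generators are involutions, explaining the dichotomy in the statement.
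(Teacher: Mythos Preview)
Your proposal is correct and matches the paper's own approach: the corollary is stated without proof immediately after Theorem~\ref{thm:Cayley-Ricci}, and is meant to be read as the direct substitution you perform. The only minor imprecision is that if every $\delta\in G$ is an involution the set defining $\alpha_1$ is empty, but in that case the first claim $\Ric\geq 0$ follows a fortiori from the second, so nothing is lost.
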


In Section~\ref{sec:cycles}, we will apply Corollary~\ref{cor:srw} to
analyze the curvature of some random walks on the symmetric group.

\medskip

Since the mapping representation in the present situation is not
commutative, the Bochner--Bakry--\'Emery method developed in
\cite{FM15} (see Prop. \ref{prop:BBE}) does not apply
immediately. Instead, we will combine it with a technique developed in
\cite{EMT15} which consists in partitioning the $\cB$-term into
contributions coming from square subgraphs of $(\G,E)$. We need some
notation before we come to the proof of Theorem
\ref{thm:Cayley-Ricci}.

A \emph{square} in the Cayley graph is a set
$\Box=\{x_1,x_2,x_3,x_4\}$ such that $x_{i+1}x_i^{-1}\in G$ for all
$i=1,\ldots,4$ with the convention that $x_5=x_1$. We write for short
$\delta_i:=x_{i+1}x_i^{-1}$. Given two maps $\delta$ and
$\eta\neq\delta,\delta^{-1}$ in $G$, we obtain for each $x\in\G$ a
square
\begin{align}\label{eq:box}
  \Box(x,\delta,\eta)=\{x_1=x,x_2=\delta x,x_3=\eta\delta x,x_4=\eta
  x\}\;.
\end{align}
Indeed, by invariance of $G$ under conjugation, we have that
$x_4x_3^{-1}=\eta x(\delta\eta x)^{-1}=\eta\delta^{-1}\eta^{-1}\in
G$. The other relations $x_{i+1}x_i^{-1}\in G$ for $i=1,2,3$ hold
trivially. The squares obtained in this way fall into two classes
depending on whether $\delta$ and $\eta$ commute or not. Let $S_1$ be
the collection of all squares obtained from commuting maps
$\delta,\eta$ and let $S_2$ denote the collection of all squares
obtained form non-commuting maps.

Given such a square $\Box$ and two functions
$\rho\in\cP_*(\G),\psi\in\R^\G$ we set
\begin{align*}
  \cB^{\mathrm{diag}}_\Box(\rho,\psi) &= \sum\limits_{i=1}^4 B(x_i,\delta_i,\delta_i)q(x_i,\delta_i,\delta_i) + B(x_i,\delta_{i-1}^{-1},\delta_{i-1}^{-1})q(x_i,\delta_{i-1}^{-1},\delta_{i-1}^{-1})\;,\\
  \cB^{\mathrm{off}}_\Box(\rho,\psi) &= \sum\limits_{i=1}^4 \Big[B(x_i,\delta_i,\delta_{i-1}^{-1}) + B(x_i,\delta_{i-1}^{-1},\delta_i)\Big]q(x_i,\delta_i,\delta_{i-1}^{-1})\;,\\
\end{align*}
as well as $\cB_\Box(\rho,\psi)=
\cB^{\mathrm{diag}}_\Box(\rho,\psi)+\cB^{\mathrm{off}}_\Box(\rho,\psi)$. Note
that $\cB_\Box(\rho,\psi)$ is the quantity $\cB$ calculated in the
square graph $\Box$ with the restrictions of $\rho,\psi$ to $\Box$.
We will proceed by rearranging the $\cB$-term of the full Cayley graph
into contributions from squares and apply the techniques of the
previous section separately in each square.

\begin{proof}[Proof of Thm.~\ref{thm:Cayley-Ricci}]
  We have to show that 
 $$\cB(\rho,\psi)\geq
  \left[1-e^{\alpha_1}-\beta(|G|-1)\Big(e^{2\alpha_2}-1\Big)\right]2c_*\cA(\rho,\psi)$$
  holds for any $\rho\in \cP_*(\G),\psi\in\R^\G$. We drop $\rho,\psi$
  from the notation for the rest of the proof. We distinguish on- and
  off-diagonal contributions to $\cB$ by writing
  $\cB=\cB^{\mathrm{diag}}+\cB^{\mathrm{off,1}}+\cB^{\mathrm{off,2}}$ with
  \begin{align*}
    \cB^{\mathrm{diag}}&= \sum\limits_{x\in \G,\delta\in G}B(x,\delta,\delta)q(x,\delta,\delta)\;,\\
    \cB^{\mathrm{off,1}}&= \sum\limits_{x\in \G,\delta\in G:\delta\neq\delta^{-1}}B(x,\delta,\delta^{-1})q(x,\delta,\delta^{-1})\;,\\
    \cB^{\mathrm{off,2}}&= \sum\limits_{x\in \G,\delta,\eta\in G: \eta\neq\delta,\delta^{-1}}B(x,\delta,\eta)q(x,\delta,\eta)\;.
  \end{align*}
  We first estimate $ \cB^{\mathrm{off,1}}$. Symmetrizing in
  $\delta,\delta^{-1}$ and using Lemma \ref{lem:pointwise}, we obtain
  \begin{align}\label{eq:Boff-rewrite0}
     \cB^{\mathrm{off,1}} \geq - \sum\limits_{x\in \G,\delta\in G:\delta\neq\delta^{-1}}B(x,\delta,\delta)q(\delta x,\delta^{-1},\delta)
     \geq -e^{\alpha_1} \cB^{\mathrm{diag}}\;.
  \end{align}
 Now, we claim that
  \begin{align}\label{eq:Boff-rewrite1}
    \cB^{\mathrm{off,2}}&= \sum\limits_{\Box\in S_1}\cB^{\mathrm{off}}_\Box +\frac12\sum\limits_{\Box\in S_2}\cB^{\mathrm{off}}_\Box\;,\\\label{eq:Boff-rewrite2}
\cB^{\mathrm{diag}}&= \frac{1}{|G|-1}\left[\sum\limits_{\Box\in S_1}\cB^{\mathrm{diag}}_\Box +\frac12\sum\limits_{\Box\in S_2}\cB^{\mathrm{diag}}_\Box\right]\;.
  \end{align}
  Indeed, each term $B(x,\delta,\eta)q(x,\delta,\eta)$ appears in
  exactly one square from $S_1$, namely the square $\Box(x,\delta,\eta)$
  defined in \eqref{eq:box}, if $\delta$ and $\eta$ commute. If they
  do not commute, then the term $B(x,\delta,\eta)q(x,\delta,\eta)$
  appears in exactly two squares from $S_2$, namely
  $\Box(x,\delta,\eta)$ and $\Box(x,\eta,\delta)$. Moreover, each term
  $B(x,\delta,\delta)q(x,\delta,\delta)$ appears in exactly $N_1$
  squares in $S_1$ and in exactly $2N_2$ squares in $S_2$ with
  $N_1=\#\{\eta\in G:\eta\neq \delta, \delta^{-1},\delta\eta=\eta\delta\}$ and
  $N_2=\#\{\eta\in \G: \delta\eta\neq \eta\delta\}$. Obviously,
  $N_1+N_2\leq|G|-1$.
  
  To calculate the $\cB$-terms in each square $\Box$, we apply the
  techniques of the previous section by choose a new mapping
  representation consisting of two maps $\delta,\tilde\delta$ that is
  involutive and commutative. For instance set $\delta x_i= x_2, x_1,
  x_4, x_3\;, \tilde \delta x_i= x_4, x_3, x_2, x_1$ for $i=1,2,3,4$.
  Thus, following the proofs of Theorem \ref{thm:perturb} and
  Corollary \ref{cor:perturb}, we find
  \begin{align*}
    \cB^{\mathrm{off}}_\Box \geq - \beta\Big(e^{2\alpha_2}-1\Big)\cB^{\mathrm{diag}}_\Box\;.
  \end{align*}
  Combing this with \eqref{eq:Boff-rewrite0}, \eqref{eq:Boff-rewrite1}, \eqref{eq:Boff-rewrite2}
  and using Lemma \ref{lem:B-diagonal} yields
 \begin{align*}
   \cB&=\cB^{\mathrm{diag}}+\cB^{\mathrm{off,1}}+\cB^{\mathrm{off,2}}
   \geq \cB^{\mathrm{diag}}\left[1-e^{\alpha_1}-(|G|-1)\beta\Big(e^{2\alpha_2}-1\Big)\right]\\
 &\geq \left[1-e^{\alpha_1}-(|G|-1)\beta\Big(e^{2\alpha_2}-1\Big)\right]2c_*\cA\;,
 \end{align*}
 which finishes the proof of the first statement. To obtain the second
 statement, we simply note that $\cB^{\mathrm{off,1}}=0$ if
 $\delta=\delta^{-1}$ for all $\delta\in G$.
\end{proof}

\section{Examples}
\label{sec:examples}

In this section we apply our perturbation method to derive Ricci
bounds in concrete models. First, we consider a general Ising model in
the high temperature regime. Then, we specialize this result to obtain
bounds for the Ising model on a finite sub-lattice of $\Z^d$ and the
Curie--Weiss model. Moreover, we consider a general hard-core model
put forward in \cite{DPP} and extend the results on convex entropy
decay obtained there to the level of Ricci curvature.

\subsection{Bounds for a general Ising model}
\label{sec:Ising-general}

Let $n\in\N$ and introduce the state space $\cX=\{-1,1\}^n$. Let
$k\in\R^{n\times n}$ be a matrix modeling the interaction strength
between the sites. We set $k_{ii}=0$ for all $i$. Then we introduce
the Hamiltonian $H:\cX\to\R$ via
\begin{align*}
  H(x)~=~ - \sum\limits_{i,j=1}^n k_{ij}x_ix_j\;.
\end{align*}
Note that we make no assumption on the sign of $k$. We consider the probability measure
\begin{align*}
  \pi_\beta(x)~=~\frac1{Z_\beta}\exp\big(-\beta H(x)\big)\;,
\end{align*}
where $Z_\beta$ is a normalizing constant and $\beta\in(0,\infty)$
denotes the inverse temperature. We consider the associated Glauber
dynamics, the continuous time Markov chain given by the q-matrix
\begin{align*}
  Q_\beta(x,y)
   ~=~
   \begin{cases}
     \sqrt{\frac{\pi_\beta(y)}{\pi_\beta(x)}}\;, & \text{if } \norm{x-y}_{l^1}=1\;,\\
     0\;, & \text{else.}
   \end{cases}
\end{align*}
A natural mapping representation is given as follows. Let
$G=\{\delta_i,~i=1,\dots,n\}$, where $\delta_i:\cX\to\cX$ is the map
flipping the $i$-th coordinate, i.e. $\big(\delta_i(x)\big)_i=-x_i$
and $\big(\delta_i(x)\big)_j=x_j$ for all $j\neq i$. Then we put
\begin{align*}
  c(x,\delta_i)~=~\sqrt{\frac{\pi_\beta(\delta_i x)}{\pi_\beta(x)}}~=~e^{-\frac{\beta}{2}\nabla_i H(x)}\;.
\end{align*}
where we write for short $\nabla_i H(x)=\nabla_{\delta_i}
H(x)=H(\delta_ix)-H(x)$.  Note that this mapping representation is
commutative and involutive, i.e.~$\delta_i^{-1}=\delta_i$. 

We have the following Ricci bound for the Glauber dynamics of the
general Ising model.

\begin{theorem}\label{thm:Ric-Ising}
  Assume that 
  \begin{align}\label{eq:eps-beta}
    \eps(\beta) ~:=~ \max_i\sum\limits_{j,j \neq
      i}\exp\left(2\beta\sum\limits_{m\neq
        i,j}|k_{im}|+|k_{jm}|\right)\Big(e^{4\beta |k_{ij}|}-1\Big)
    ~\leq~1\;.
  \end{align}
  Then the Glauber dynamics satisfies
  \begin{align*}
    \Ric(\cX,Q_\beta,\pi_\beta)~\geq~\big(1-\eps(\beta)\big)2c_*\;,
  \end{align*}
  where $c_*=\min\{c(x,\delta): x,\delta\}$
  denotes the minimal transition rate.
\end{theorem}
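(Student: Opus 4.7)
The plan is to derive the bound as a direct application of Theorem~\ref{thm:perturb}. The mapping representation $(G,c)$ for the Glauber dynamics is both commutative and involutive, i.e.\ $\delta_i^{-1}=\delta_i$, so the indicator $\one_{\delta\neq\delta^{-1}}$ in \eqref{eq:ass-rates} vanishes identically, and it will suffice to establish, for every $x\in\cX$ and every $i\in\{1,\dots,n\}$, the pointwise estimate
\begin{equation*}
  \sum_{j\neq i}\frac{(q-q_*)(\delta_i x,\delta_i,\delta_j)}{c(x,\delta_i)\,\pi(x)}~\leq~\eps(\beta)\,c(x,\delta_i).
\end{equation*}
Combined with the trivial bound $c(x,\delta_i)\geq c_*$, this yields $\lambda\geq(1-\eps(\beta))c_*$ and hence $\Ric\geq 2(1-\eps(\beta))c_*$ via Theorem~\ref{thm:perturb}.

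A first simplification uses detailed balance: since $c(x,\delta_i)\pi(x)=c(\delta_i x,\delta_i)\pi(\delta_i x)$, each summand above equals $c(\delta_i x,\delta_j)\bigl(1-q_*(\delta_i x,\delta_i,\delta_j)/q(\delta_i x,\delta_i,\delta_j)\bigr)$. The key algebraic observation is the identity
\begin{equation*}
  q(y,\delta_i,\delta_j)~=~Z_\beta^{-1}\exp\!\Bigl(-\tfrac{\beta}{2}\bigl(H(\delta_i y)+H(\delta_j y)\bigr)\Bigr),
\end{equation*}
which follows by combining the explicit form of $c$ with $\pi$. Evaluated at the four corners $y\in\{x,\delta_i x,\delta_j x,\delta_i\delta_j x\}$ of the commuting square, the values of $q$ pair up (namely $y=x$ with $y=\delta_i\delta_j x$, and $y=\delta_i x$ with $y=\delta_j x$), and the telescoped difference between the two distinct values isolates a single two-body contribution governed by $k_{ij}$. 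This yields the sharp pointwise bound $1-q_*/q\leq 1-e^{-4\beta|k_{ij}|}$.

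A second, independent computation handles the prefactor $c(\delta_i x,\delta_j)/c(x,\delta_i)$. Writing this as $\exp(-\tfrac{\beta}{2}[\nabla_j H(\delta_i x)-\nabla_i H(x)])$ and expanding the discrete gradients, the exponent decomposes into a two-body part depending only on $k_{ij}$ and a many-body part depending on couplings to the remaining sites $m\neq i,j$, producing
\begin{equation*}
  \frac{c(\delta_i x,\delta_j)}{c(x,\delta_i)}~\leq~\exp\!\Bigl(4\beta|k_{ij}|+2\beta\!\!\sum_{m\neq i,j}\!(|k_{im}|+|k_{jm}|)\Bigr).
\end{equation*}
Multiplying this by the previous bound and invoking the elementary identity $e^{4\beta|k_{ij}|}\bigl(1-e^{-4\beta|k_{ij}|}\bigr)=e^{4\beta|k_{ij}|}-1$, the result is precisely the summand appearing in the definition of $\eps(\beta)$; summing over $j\neq i$ and maximizing over $i$ closes the argument.

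The main obstacle is not conceptual but combinatorial: one must split the discrete gradients $\nabla_k H$ appearing in both bounds consistently into the contribution from the single pair $\{i,j\}$---which produces the sharp factor $e^{4\beta|k_{ij}|}-1$ measuring the non-homogeneity of the rates along the edge $(i,j)$---and the contributions from the remaining sites $m\neq i,j$, which aggregate into the geometric prefactor $\exp(2\beta\sum_{m\neq i,j}(|k_{im}|+|k_{jm}|))$. Once this decomposition is carried out carefully on both sides of the estimate, no ideas beyond Theorem~\ref{thm:perturb} and the detailed-balance relation are required.
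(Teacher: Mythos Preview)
Your proof is correct and follows essentially the same route as the paper: both apply Theorem~\ref{thm:perturb} and reduce to the identical pointwise bound \eqref{eq:q-Ising} via the identity $q(y,\delta_i,\delta_j)=Z_\beta^{-1}\exp\bigl(-\tfrac{\beta}{2}(H(\delta_i y)+H(\delta_j y))\bigr)$. The only cosmetic difference is that you factor the ratio as $\tfrac{c(\delta_i x,\delta_j)}{c(x,\delta_i)}\cdot\bigl(1-q_*/q\bigr)$ and bound each factor, whereas the paper bounds the numerator $q(\delta_i x,\delta_i,\delta_j)-q_*$ and the denominator $q(x,\delta_i,\delta_i)$ separately; both yield exactly the same summand in $\eps(\beta)$.
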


\begin{proof}
  The claim is a consequence of the first part of Theorem
  \ref{thm:perturb} once we have established the following
  estimate. Let $q_*$ be defined as in \eqref{eq:qstar1}. Then, for
  all $x\in\cX$ and all $i,j=1,\dots, n$ we have:
  \begin{align}\label{eq:q-Ising}
    \frac{q(\delta_i x,\delta_i,\delta_j)-q_*(x,\delta_i,\delta_j)}{q(x,\delta_i,\delta_i)}
   ~\leq~
   \exp\left(2\beta\sum\limits_{m\neq i,j}|k_{im}|+|k_{jm}|\right)\Big(e^{4\beta |k_{ij}|}-1\Big)\;.
 \end{align}
 Indeed, we first note that
  \begin{align*}
    q(x,\delta_i,\delta_j)~=~\exp\left(-\frac{\beta}{2}\big(H(\delta_ix) + H(\delta_jx)\big)\right) \;.
  \end{align*}
  Note further that for $i\neq j$ we have
  \begin{align*}
    H(x)~&=~-\sum\limits_{l,m\neq i,j}k_{lm}x_l x_m - 2\sum\limits_{m\neq i,j}k_{im}x_i x_m- 2\sum\limits_{m\neq i,j}k_{jm}x_j x_m -2k_{ij}x_i x_j\;,\\
    H(\delta_i x)~&=~-\sum\limits_{l,m\neq i,j}k_{lm}x_l x_m + 2\sum\limits_{m\neq i,j}k_{im}x_i x_m -2\sum\limits_{m\neq i,j}k_{jm}x_j x_m +2k_{ij}x_i x_j\;,
   \end{align*}
  which yields
  \begin{align*}
    H(\delta_i x) + H(\delta_j x) ~=~ -2\sum\limits_{l,m\neq i,j}k_{lm}x_l x_m + 4 k_{ij}x_i x_j\;.  
  \end{align*}
  Since the first term does not depend on the coordinates $i,j$, we get for $y\in\{x,\delta_ix,\delta_j x,\delta_i\delta_jx\}$:
  \begin{align*}
    q(y,\delta_i,\delta_j) ~=~ \exp\left(\beta\sum\limits_{l,m\neq i,j}k_{lm}x_lx_m\right)\exp\big(-2\beta k_{ij} y_i y_j\big)\;,
  \end{align*}
  and we conclude that
 \begin{align*}
   q(\delta_ix,\delta_i,\delta_j)-q_*(x,\delta_i,\delta_j)~\leq~ \exp\left(\beta\sum\limits_{l,m\neq
       i,j}k_{lm}x_lx_m\right)\Big(e^{2\beta |k_{ij}|}-e^{-2\beta
     |k_{ij}|}\Big)\;.
 \end{align*}
 Similarly, noting that $q(x,\delta_i,\delta_i)=\exp\big(-\beta H(\delta_ix)\big)$,
 we obtain the estimate
  \begin{align}\nonumber
q(x,\delta_i,\delta_i)
   ~&=~\exp\left(\beta\sum\limits_{l,m\neq
       i,j}k_{lm}x_lx_m\right)\exp\left(-2\beta \left[\sum\limits_{m\neq i,j}(k_{im}x_i-k_{jm}x_j)x_m+k_{ij}x_ix_j\right]\right)\\
\label{eq:frust}
    ~&\leq~\exp\left(\beta\sum\limits_{l,m\neq
       i,j}k_{lm}x_lx_m\right)
    \exp\left(-2\beta\left(|k_{ij}|+\sum\limits_{m\neq i,j}|k_{im}|+|k_{jm}|\right)\right)\;,
  \end{align}
  which yields the claim \eqref{eq:q-Ising}. Thus, by
  \eqref{eq:eps-beta} we find that
  \begin{align*}
    \lambda = \min_{x,i}c(x,\delta_i)\Big[1-\sum_{j\neq i} \frac{(q-q_*)(\delta_i x,\delta_i,\delta_j)}{q(x,\delta_i,\delta_i)}\Big]
    \geq c_*\Big(1-\eps(\beta)\Big)\geq0\;.
  \end{align*}
  Hence, the assumption \eqref{eq:ass-rates} of Theorem
  \ref{thm:perturb} is satisfied and the thesis follows.
\end{proof}

\begin{remark}\label{rem:ferro}
  In \eqref{eq:q-Ising} we have given a worst-case estimate in
  terms of the absolute value of the interaction. This estimate seems
  rather sharp if the interaction is ferromagnetic, i.e.~$k_{ij}\geq0$
  for all $i,j$. However, in models where the interaction matrix
  changes sign, a finer estimate making use of frustration effects
  should be possible. This concerns the second line of
  \eqref{eq:frust}.
\end{remark}

Let us now specialize our result to the $d$-dimensional Ising model
and the Curie--Weiss model.

\subsubsection{The d-dimensional Ising model}
\label{sec:d-dimIsing}

Let $\Lambda$ be a finite connected subset of $\Z^d$ endowed with the
natural graph structure. Put $n=\abs{\Lambda}$. We consider the
Hamiltonian $H:\{-1,1\}^\Lambda\to\R$ given by
\begin{align*}
  H(x)~=~-\frac12\sum\limits_{i\sim j}x_ix_j\;,
\end{align*}
where $i\sim j$ means that $i$ and $j$ are adjacent. Note that this is
of the form that we considered in the previous section, namely it
corresponds to choosing the matrix $k\in\R^{n\times n}$ as
\begin{align*}
  k_{ij}~=~
  \begin{cases}
    \frac12\;, & i\sim j\;,\\
     0 \;, & \text{else.}
  \end{cases}
\end{align*}
Noting that each site has at most $2d$ neighbors we have the following bound
\begin{align}\label{eq:eps-beta-dIsing}
  \eps(\beta)~\leq~(2d-1)e^{2\beta(2d-1)}\big(e^{2\beta}-1\big)\;.
\end{align}
Moreover, the minimal transition rate for the Glauber dynamics becomes $c_*=e^{-\beta d}$.

\begin{corollary}\label{cor:d-Ising}
  Assume that $\eps(\beta)\leq 1$. Then the Glauber dynamics for the
  $d$-dimensional Ising model satisfies
  \begin{align*}
    \Ric(Q_{Gl})~\geq~\big(1-\eps(\beta)\big)2e^{-\beta d}\;,
  \end{align*}
  where $\eps(\beta)$ is given by \eqref{eq:eps-beta-dIsing}.
\end{corollary}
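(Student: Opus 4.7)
The corollary is a direct specialization of Theorem~\ref{thm:Ric-Ising} to the case of nearest-neighbor interactions on $\Lambda\subset \Z^d$, corresponding to the choice $k_{ij}=\tfrac12\one_{i\sim j}$ identified in the set-up. The plan is therefore just to verify the two quantitative inputs of that theorem: an upper bound on $\eps(\beta)$ matching \eqref{eq:eps-beta-dIsing}, and the value of the minimal jump rate $c_*$ stated in the corollary. The hypothesis $\eps(\beta)\leq 1$ is then inherited directly from the assumption of the corollary.

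The bound on $\eps(\beta)$ is a pure counting argument inside the inner sum of \eqref{eq:eps-beta}. The prefactor $e^{4\beta|k_{ij}|}-1$ vanishes unless $j\sim i$, so for each fixed $i$ only the neighbors of $i$ contribute, giving at most $2d-1$ surviving terms. For each such $j$, substituting $|k_{ij}|=1/2$ produces the factor $e^{2\beta}-1$. For the exponential weight, since $j$ is itself a neighbor of $i$ (and vice versa), one has $\sum_{m\neq i,j}|k_{im}|\leq\tfrac12(2d-1)$ and symmetrically for $j$, so the full exponent is bounded by $2\beta(2d-1)$. Multiplying these three factors yields \eqref{eq:eps-beta-dIsing}.

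The minimal rate follows from the explicit formula $c(x,\delta_i)=\exp(-\tfrac{\beta}{2}\nabla_i H(x))$. A one-line spin-flip computation shows that $\nabla_i H(x)$ is a scalar multiple of $x_i\sum_{j\sim i}x_j$, whose absolute value is bounded by a constant multiple of the degree at $i$, hence by a constant multiple of $d$; this gives the claimed $c_*=e^{-\beta d}$. Inserting both inputs into Theorem~\ref{thm:Ric-Ising} produces the stated Ricci bound. There is no genuine obstacle here; the argument is a routine neighbor-counting exercise, the only care required being to track the factor $1/2$ in the normalization of $k_{ij}$ consistently throughout.
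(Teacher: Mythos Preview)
Your approach is exactly the paper's: the corollary is an immediate application of Theorem~\ref{thm:Ric-Ising} with $k_{ij}=\tfrac12\one_{i\sim j}$, and the paper records the same neighbor-counting bound on $\eps(\beta)$ and the value $c_*=e^{-\beta d}$ in the paragraphs just before stating the corollary. One small slip in your count: a site in $\Z^d$ has up to $2d$ neighbors, not $2d-1$, so the outer sum over $j\sim i$ in \eqref{eq:eps-beta} has at most $2d$ nonvanishing terms; the quantity $2d-1$ enters only through the inner exponent via $\sum_{m\neq i,j}|k_{im}|\le\tfrac12(2d-1)$ (since $j$ is itself one of the neighbors of $i$).
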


In particular, for $d=2$ we see using the bound
\eqref{eq:eps-beta-dIsing} that the condition $\eps(\beta)\leq1$ is
satisfied if
\begin{align*}
  3e^{6\beta}\big(e^{2\beta}-1\big)~\leq~1\;,\quad\text{resulting in approximately}\quad \beta~\leq~0.089\;.
\end{align*}

\subsubsection{The Curie--Weiss model}
\label{sec:CW}

We consider the Hamiltonian $H:\{-1,1\}^n\to\R$ given by
\begin{align*}
  H(x)~=~-\frac1{2n}\sum\limits_{i,j=1}^nx_ix_j\;,
\end{align*}
Note that this is of the form we considered in the previous
section -- corresponds to choosing the matrix $k\in\R^{n\times
  n}$ as
\begin{align*}
  k_{ij}~=~\frac1{2n}\quad\forall i,j=1,\dots,n\;.
\end{align*}
Thus we see that \eqref{eq:eps-beta} turns into
\begin{align}\label{eq:eps-beta-CW}
  \eps(\beta)~=~(n-1) e^{2\beta\frac{n-2}{n}}\big(e^{2\beta\frac{1}{n}}-1\big)\;.
\end{align}
Moreover, the minimal transition rate for the Glauber dynamics becomes
$c_*=e^{-\beta\frac{n-1}{2n}}$.

\begin{corollary}\label{cor:CW}
  Assume that $\eps(\beta)\leq 1$. Then the Glauber dynamics for the
  Curie--Weiss model satisfies
  \begin{align*}
    \Ric(Q_{Gl})~\geq~\big(1-\eps(\beta)\big)2e^{-\beta\frac{n-1}{2n}}\;,
  \end{align*}
  where $\eps(\beta)$ is given by \eqref{eq:eps-beta-CW}.
\end{corollary}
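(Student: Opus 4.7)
The plan is to apply Theorem~\ref{thm:Ric-Ising} directly, since the Curie--Weiss model fits into the general Ising framework of Section~\ref{sec:Ising-general} with the choice $k_{ij} = \tfrac{1}{2n}$ for $i \neq j$ (the diagonal entries $k_{ii}=0$ merely shift $H$ by a constant and do not affect the dynamics). The entire proof then reduces to evaluating the two ingredients of Theorem~\ref{thm:Ric-Ising}, namely $\eps(\beta)$ from \eqref{eq:eps-beta} and the minimal transition rate $c_*$.

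For $\eps(\beta)$, I would fix a site $i$. For any $j \neq i$ the inner sum $\sum_{m\neq i,j}(|k_{im}|+|k_{jm}|)$ consists of exactly $2(n-2)$ identical terms equal to $\tfrac{1}{2n}$, producing the value $\tfrac{n-2}{n}$, while $e^{4\beta|k_{ij}|}-1 = e^{2\beta/n}-1$. Neither expression depends on $i$ or $j$, so the outer maximum is trivial and the remaining sum over the $(n-1)$ values of $j\neq i$ collapses to the factor $n-1$, yielding the closed form \eqref{eq:eps-beta-CW}.

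For $c_*$, the formula $c(x,\delta_i) = e^{-\frac{\beta}{2}\nabla_i H(x)}$ together with the direct calculation $\nabla_i H(x) = 4 x_i \sum_{m\neq i} k_{im} x_m$ (valid under the symmetry $k_{ij}=k_{ji}$ and $k_{ii}=0$) shows the rate is minimized in the two fully aligned configurations, giving the value $c_*$ stated in the corollary. With the standing assumption $\eps(\beta) \leq 1$, condition \eqref{eq:ass-rates} in Theorem~\ref{thm:perturb} is satisfied through Theorem~\ref{thm:Ric-Ising}, and the bound $\Ric\geq(1-\eps(\beta))\,2c_*$ follows at once.

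There is no substantive obstacle here --- the proof is a direct specialization of Theorem~\ref{thm:Ric-Ising}. The only point worth emphasizing is that the mean-field scaling $k_{ij} = O(1/n)$ keeps $\eps(\beta)$ bounded as $n\to\infty$: in that limit one easily checks $\eps(\beta) \to 2\beta\, e^{2\beta}$, so the hypothesis $\eps(\beta)\le 1$ gives a Ricci curvature bound uniform in the system size $n$ for sufficiently small $\beta$, which is the main qualitative content of the corollary.
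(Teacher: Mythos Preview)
Your proposal is correct and follows exactly the paper's own route: the corollary is obtained as an immediate specialization of Theorem~\ref{thm:Ric-Ising} to the choice $k_{ij}=\tfrac{1}{2n}$, and the paper likewise just records the resulting values of $\eps(\beta)$ and $c_*$ before stating the corollary. Your write-up merely makes the arithmetic behind \eqref{eq:eps-beta-CW} and the location of the minimal rate explicit, which the paper leaves to the reader.
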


Note that if we disregard corrections of the order $O(\frac1n)$ the
condition $\eps(\beta)\leq1$ corresponds to
\begin{align*}
   2\beta e^{2\beta}~\leq~1\;,\quad\text{approximately}\quad\beta~\leq~ 0.284\;.
\end{align*}

Recall from Section \ref{sec:prelim} that an entropic Ricci bound
$\Ric\geq \kappa>0$ implies the modified logarithmic Sobolev
inequality \eqref{eq:MLSI}. Thus, we obtain in particular that the
Glauber dynamics for the Curie--Weiss model satisfies MLSI up to the
inverse temperature $\beta\approx 0.284$. In a recent preprint, Marton
\cite{Mar15} showed that the MLSI holds up to the critical inverse
temperature $\beta=1$, which is beyond the scope of our perturbative
approach. It remains an open question to determine the optimal Ricci
bound for the Curie--Weiss model.

\subsection{Bounds for a general hard-core model}

In this section we derive Ricci bounds for a general hard-core model
put forward in \cite{DPP}.

Let $T$ be a finite set and consider the configuration space $S :=
\{x: T \to \N\cup\{0\}\}$. A set $A\subset
S$ is called {\em decreasing} if for all $x,y\in S$, it holds
\begin{align*}
  x\in A\;,\ y(i)\leq x(i), \ \  \forall i\in T \quad\Rightarrow\quad y\in A\;.
\end{align*}
We fix a finite decreasing set $A$ and call it the set of {\em allowed
  configurations}. We fix a function $\nu: T \to (0,\infty)$, called
the {\em intensity}, and define a probability measure $\pi$ on
$\cX:=A$ by
\begin{align*}
  \pi(x) = \frac{1}{Z} \prod_{i\in T} \frac{\nu(i)^{x(i)}}{x(i)!},
\end{align*}
where $Z$ is a normalization constant. A Markov dynamics on $\cX$ is
given by the rate matrix
\begin{align*}
  Q(x,y) :=
  \begin{cases}
    \nu(i)\one_{\{x+\one_i\in A\}}\;, & \text{if } y=x+\one_i\;,\\
    x(i)\one_{\{x-\one_i\in A\}}\;, & \text{if } y=x-\one_i\;,\\
    0\;, & \text{else}\;.
  \end{cases}
\end{align*}
Note that this dynamics is reversible w.r.t.~$\pi$. A natural mapping
representation for this model is given on the extended state space
$\cX'=S$ (c.f. Remark \ref{rmk:enlarge}) as follows. Let
\begin{align*}
  G = \{\gamma_i^+, \gamma_i^-: i \in T\}\,,
\end{align*}
where $\gamma_i^+,\gamma_i^-:S\to S$ are the creation and annihilation
maps defined by
\begin{align*}
  \gamma_i^+(x) =  x + \one_i\;,\qquad  \gamma_i^-(x) = \begin{cases}
    x - \one_i\;, &\text{if } x(i)>0\;,\\
    x\;, &\text{else }.
  \end{cases}
\end{align*}
We then may define the transition rates $c:\cX'\times G\to\R_+$ by
\begin{align*}
  c(x, \gamma_i^+) = \nu(i) \one_{\{x + \one_i \in A\}}\;,\qquad c(x, \gamma_i^-) = x(i) \one_{\{x(i)>0\}}\;.
\end{align*}

Define
\begin{align}\label{eq:epsilon-def1}
	\epsilon_0 &= \max_{x\in A, i\in T: x(i)>0}  \sum_{j \neq i} \nu(j) \one_{\{x + \one_j - \one_i \in A\}} \one_{\{x + \one_j \notin A\}}\;,\\
\label{eq:epsilon-def2}
	\epsilon_1 &= \min_{x\in A, i\in T: x(i)>0} \nu(i) \one_{\{x+\one_i\notin A\}}\;.
\end{align}

We have the following entropic Ricci curvature bound for the general
hard-core model.

\begin{theorem}\label{thm:hard-core}
  Assume that $\epsilon_0\leq 1$. Then, we have that
  \begin{align*}
    \Ric(\cX,Q,\pi) \geq \frac12(1 - \epsilon_0 + \epsilon_1)\;.
  \end{align*}
\end{theorem}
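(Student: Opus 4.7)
My plan is to invoke the second statement of Theorem~\ref{thm:perturb} on the extended state space $\cX' = S$ (see Remark~\ref{rmk:enlarge}) with the natural splitting $H_1 = \{\gamma_i^+ : i \in T\}$ and $H_2 = \{\gamma_i^- : i \in T\}$. Since $(\gamma_i^\pm)^{-1} = \gamma_i^\mp$, one checks immediately that $H_1 \cap H_2 = \emptyset$ and $H_i \cup H_i^{-1} = G$ for $i = 1,2$. The remaining task is to show that $\lambda_1 \geq \epsilon_1$ and $\lambda_2 \geq 1 - \epsilon_0$; Theorem~\ref{thm:perturb}(ii) will then yield $\Ric(\cX,Q,\pi) \geq \frac{1}{2}(\lambda_1 + \lambda_2) \geq \frac{1}{2}(1 - \epsilon_0 + \epsilon_1)$.

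For the annihilation part ($\delta = \gamma_i^-$ with $x(i) > 0$) the baseline is immediate: $c(x, \gamma_i^-) - c(\gamma_i^- x, \gamma_i^-) = x(i) - (x(i)-1) = 1$. To control the deficiency $\sum_\eta \frac{(q-q_*)(\delta x, \delta^{-1}, \eta)}{c(x,\delta)\pi(x)}$ I would expand each of the four $q$-values defining $q_*(x - \one_i, \gamma_i^+, \eta)$ using the explicit form of $\pi$, factoring out a common prefactor ($x(i)\nu(j)\pi(x)$ for $\eta = \gamma_j^+$, and $x(i)x(j)\pi(x)$ for $\eta = \gamma_j^-$) multiplying a membership indicator. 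For $\eta = \gamma_j^-$ the decreasing hypothesis on $A$ forces every indicator to coincide with $\one_{x(j) > 0}$, so $q = q_*$ and that contribution vanishes. For $\eta = \gamma_j^+$ the surviving deficiency is $x(i)\nu(j)\pi(x)\one_{x+\one_j-\one_i \in A,\,x+\one_j \notin A}$; dividing by $c(x,\gamma_i^-)\pi(x) = x(i)\pi(x)$ reproduces exactly the summand in the definition of $\epsilon_0$, so summing in $j$ and minimizing in $(x,i)$ gives $\lambda_2 \geq 1 - \epsilon_0$.

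For the creation part ($\delta = \gamma_i^+$ with $x + \one_i \in A$) the baseline is $c(x,\gamma_i^+) - c(\gamma_i^+ x, \gamma_i^+) = \nu(i)\one_{x + 2\one_i \notin A}$. The crucial claim is that the deficiency vanishes entirely. For $\eta = \gamma_j^+$ the four $q$-values at $(x + \one_i, \gamma_i^-, \gamma_j^+)$ all collapse to $\nu(i)\nu(j)\pi(x)\one_{x+\one_i+\one_j \in A}$: three of them equal this after an elementary rewriting, and the fourth carries the indicator $\one_{x + \one_j \in A}$, which dominates $\one_{x+\one_i+\one_j \in A}$ by the decreasing property, so the minimum over the four agrees with $q$. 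For $\eta = \gamma_j^-$ the would-be deficiency carries the indicator $\one_{x(j) > 0,\,x + \one_i - \one_j \notin A}$; but $x + \one_i \in A$ together with $(x + \one_i)(j) = x(j) \geq 1$ forces $x + \one_i - \one_j \in A$ by decreasing, so the indicator is identically zero. Substituting $y = x + \one_i$ rewrites the minimum defining $\lambda_1$ as $\min_{y \in A,\,y(i) > 0} \nu(i)\one_{y + \one_i \notin A}$, which is exactly $\epsilon_1$.

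The main obstacle is the systematic detailed-balance bookkeeping for each $q_*$: one has to repeatedly use $\pi(x + \one_k)/\pi(x) = \nu(k)/(x(k) + 1)$ to rewrite the $q$-values at the four corners of the square $\{x, \delta x, \eta x, \delta\eta x\}$ in terms of a single common prefactor times a membership indicator, and then consistently apply the decreasing hypothesis on $A$ to see which indicators automatically collapse. Once those identifications are in hand, Theorem~\ref{thm:perturb}(ii) yields the claimed bound.
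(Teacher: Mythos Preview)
Your proposal is correct and follows essentially the same route as the paper: apply the second part of Theorem~\ref{thm:perturb} with $H_1=\{\gamma_i^+\}$, $H_2=\{\gamma_i^-\}$, then verify by direct detailed-balance bookkeeping that the deficiency terms $(q-q_*)(\delta x,\delta^{-1},\eta)$ vanish in all cases except $\delta=\gamma_i^-$, $\eta=\gamma_j^+$, where they reproduce the summand of $\eps_0$, yielding $\lambda_1=\eps_1$ and $\lambda_2=1-\eps_0$. The only cosmetic difference is that the paper first records the identities $(q-q_*)(x,\gamma_i^\pm,\gamma_j^\pm)$ at a generic point and then substitutes, whereas you compute $(q-q_*)(\delta x,\delta^{-1},\eta)$ case by case; the underlying computations and use of the decreasing hypothesis on $A$ are identical.
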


\begin{proof}
  The result will be a consequence of the second part of Theorem
  \ref{thm:perturb}.  We let $H_1=\{\gamma^+_i:i\in T\}$ and
  $H_2=\{\gamma^-_i:i\in T\}$. One readily checks that for all $x\in
  S$ and all $i\neq j$:
  \begin{align*}
  0=(q-q_*)(x,\gamma_i^-,\gamma_j^-)=(q-q_*)(x,\gamma_i^-,\gamma_j^+)=(q-q_*)(x,\gamma_i^{+},\gamma_j^{-})\;.
  \end{align*}
  Moreover, we have
  \begin{align*}
    (q-q_*)(x,\gamma_i^+,\gamma_j^+)=\one_{\{x+\one_i\in
    A\}}\one_{\{x+\one_j\in A\}}\one_{\{x+\one_i+\one_j\notin
    A\}}\nu(i)\nu(j)\pi(x)\;.
  \end{align*}
  This yields
  \begin{align*}
    \frac{(q-q_*)(\delta x,\delta^{-1},\eta)}{c(x,\delta)\pi(x)}
    =
    \begin{cases}
      0\;, & \delta=\gamma_i^+, \eta=\gamma_j^-\;,\\
      0\;, & \delta=\gamma_i^+, \eta=\gamma_j^+\;,\\
      0\;, & \delta=\gamma_i^-, \eta=\gamma_j^-\;,\\
      \one_{\{x(i)>0\}}\one_{\{x-\one_i+\one_j\in A\}}\one_{\{x+\one_j\notin
    A\}}\nu(j)\;, & \delta=\gamma_i^-, \eta=\gamma_j^+\;.\\
    \end{cases}
  \end{align*}
In the notation of Theorem \ref{thm:perturb} we thus obtain
\begin{align*}
  \lambda_1 &= \min_{x\in S,i\in T} c(x,\gamma_i^+)-c(\gamma_i^+x,\gamma_i^+) 
            = \min_{x\in S,i\in T} \nu(i)\one_{\{x+\one_i\in A\}}\one_{\{x+2\cdot\one_i\notin A\}}\\
            &= \min_{x\in A,i\in T:x(i)>0} \nu(i)\one_{\{x+\one_i\notin A\}}=\eps_1\;.
\end{align*}
Moreover, we get
\begin{align*}
  \lambda_2 &= \min_{x\in S,i\in T} c(x,\gamma_i^-)-c(\gamma_i^-x,\gamma_i^-)-\sum_{j\neq i}\nu(j) \one_{\{x(i)>0\}}\one_{\{x-\one_i+\one_j\in A\}}\one_{\{x+\one_j\notin A\}}\\
           &=  \min_{x\in A,i\in T:x(i)>0} x(i) - \big(x(i)-1\big) - \sum_{j\neq i}\nu(j) \one_{\{x-\one_i+\one_j\in A\}}\one_{\{x+\one_j\notin A\}}
           = 1-\eps_0\;.
\end{align*}
Applying the second part of Theorem \ref{thm:perturb} yields the
claim.
\end{proof}

\begin{remark}
  Under the same assumptions as in Theorem \ref{thm:hard-core}, Dai Pra
  and Posta established in \cite{DPP} the convex entropy decay
  inequality \eqref{eq:ced} with $\kappa=1-\eps_0+\eps_1$. Recall from
  Section \ref{sec:prelim} that $\Ric\geq \kappa$ implies
  \eqref{eq:ced}. Thus, by the previous theorem we recover, in
  particular, the result in \cite{DPP} up to a factor $1/2$ in the
  constant. Note also that the choice of the admissible function $R$
  implicit in the use of Theorem \ref{thm:perturb} coincides with the
  choice made in \cite{DPP}.
\end{remark}

Let us specialize our result to the standard hard-core model and a
model for long hard rods.

\subsubsection{The hard-core model}

Let $G = (V,E)$ be a finite, connected graph without self-loops.
Using our notation above, we let $T := V$, $\nu(i) \equiv \rho$ for
some constant $1>\rho>0$, and
\[
	A := \{x \in S: x(i) \in \{0,1\} \text{ for all } i \in T, x(i)y(j) = 0 \text{ for all } \{i,j\}\in E\}.
\]
We define $\Delta$ to be the maximum degree of any vertex in the
graph.

It is easy to see that in this case
\eqref{eq:epsilon-def1} and \eqref{eq:epsilon-def2} become
$\epsilon_0= \rho\Delta$ and $\eps_1=\rho$. Thus, we obtain the
following corollary.

\begin{corollary}
  If $\rho \leq 1/\Delta$, then we have
  $\Ric(\cX,Q,\pi)\geq\frac12\big(1-\rho(\Delta-1)\big)$.
\end{corollary}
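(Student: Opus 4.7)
The plan is to apply Theorem \ref{thm:hard-core} directly, with the main task being an explicit computation of the two quantities $\epsilon_0$ and $\epsilon_1$ defined in \eqref{eq:epsilon-def1}--\eqref{eq:epsilon-def2} for this specific hard-core model.

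First I would compute $\epsilon_1$. Fix $x \in A$ and $i \in T$ with $x(i) > 0$; since $A$ forces $x(i) \in \{0,1\}$, we must have $x(i) = 1$, and then $x + \one_i$ has $i$-coordinate equal to $2$ which immediately violates membership in $A$. Hence the indicator is always $1$ and $\epsilon_1 = \nu(i) = \rho$.

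Next I would compute $\epsilon_0$. For fixed $x \in A$ with $x(i) = 1$, I need to identify which $j \neq i$ contribute to the sum, i.e.\ for which $j$ both $x + \one_j - \one_i \in A$ and $x + \one_j \notin A$ hold. If $x(j) = 1$ already, then $x + \one_j - \one_i$ has $j$-coordinate $2$ and is not in $A$, so only the case $x(j) = 0$ matters. Under $x(j) = 0$, the condition $x + \one_j \notin A$ forces $j$ to be adjacent to some vertex $k$ with $x(k) = 1$, while $x + \one_j - \one_i \in A$ forces $j$ to have no active neighbor other than $i$; combining these, $j$ must be a neighbor of $i$ whose only active neighbor in $x$ is $i$ itself. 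The number of such $j$ is at most $\deg(i) \leq \Delta$, which yields $\epsilon_0 \leq \rho \Delta$. Taking $x = \one_i$ with $i$ of maximal degree, every $j \sim i$ satisfies the conditions, so this bound is attained: $\epsilon_0 = \rho\Delta$.

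Finally, the hypothesis $\rho \leq 1/\Delta$ exactly matches the requirement $\epsilon_0 \leq 1$ of Theorem \ref{thm:hard-core}, which then gives
\begin{equation*}
\Ric(\cX,Q,\pi) \geq \tfrac12 (1 - \epsilon_0 + \epsilon_1) = \tfrac12\bigl(1 - \rho(\Delta-1)\bigr),
\end{equation*}
as claimed. There is essentially no obstacle here: the whole content is in correctly identifying the combinatorial conditions in the definition of $\epsilon_0$, and the mildly subtle point is realizing that the ``$x + \one_j \notin A$'' constraint rules out the case $x(j) = 1$ and restricts $j$ to being precisely a neighbor of $i$ with no other active neighbor, which matches degree exactly when $x = \one_i$.
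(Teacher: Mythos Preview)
Your proof is correct and follows exactly the paper's approach: compute $\epsilon_0=\rho\Delta$ and $\epsilon_1=\rho$, then invoke Theorem~\ref{thm:hard-core}. The paper only asserts these values without justification, so your detailed combinatorial analysis of which $j$ contribute to the sum defining $\epsilon_0$ (and the verification that the bound is attained at $x=\one_i$ with $i$ of maximal degree) simply fills in what the paper calls ``easy to see.''
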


This model has been widely studied in the literature.  We refer the
interested reader to the book of Levin, Peres and Wilmer \cite{LPW09}
and the works of Luby and Vigoda \cite{LV99} and Vigoda \cite{Vig01}
concerning fast mixing results for the hard-core model.

\subsubsection{Long hard rods}

Fix two natural numbers $L$ and $k$ in the regime where $L \gg k$.
Define the space $T_-$ of horizontal rods of length $k$ to be the
collection of all sequence of adjacent vertices in $\{0,1,\dots,L\}^2$
of the form
\[
	\{(u_1, u_2), (u_1+1, u_2), \dots, (u_1 + k, u_2)\}.
\]
Similarly, define the space $T_+$ of vertical rods of length $k$ to be
the collection of all sequence of adjacent vertices in
$\{0,1,\dots,L\}^2$ of the form
\[
	\{(u_1, u_2), (u_1, u_2+1), \dots, (u_1, u_2+k)\}.
\]
We then define $T$, the set of all rods, as the union of $T_-$ and
$T_+$. The admissible set is defined to be
\[
	A = \{x \in S : x(i) \in \{0,1\} \text{ for all } i\in T, x(i) y(j) = 0 \text{ if } i \neq j \text{ and } i\cap j \neq \emptyset\}.
\]
In other words, we wish to only allow rods which do not touch.

Further, we let $\nu(i) \equiv \rho$ for some constant $\rho > 0$. It
is easy to check that in this case $\epsilon_0 = \rho (k^2 + 4k + 1)$
and $\epsilon_1 = \rho$. Thus we obtain the following corollary.

\begin{corollary}
  If $\rho \leq 1/( k^2 + 4k + 1)$, we have $\Ric(\cX,Q,\pi)\geq
  \frac12\Big(1-\rho(k^2+4k)\Big)$.
\end{corollary}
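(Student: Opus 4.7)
The plan is to apply Theorem~\ref{thm:hard-core} directly; the entire content of the corollary is the verification that $\epsilon_0=\rho(k^2+4k+1)$ and $\epsilon_1=\rho$, after which the bound
\[
	\tfrac12(1-\epsilon_0+\epsilon_1)=\tfrac12\bigl(1-\rho(k^2+4k)\bigr)
\]
follows from Theorem~\ref{thm:hard-core} under the hypothesis $\rho\leq 1/(k^2+4k+1)$, which ensures $\epsilon_0\leq1$. For $\epsilon_1$ the admissibility set forces each rod to be present at most once, since $A\subset\{x:x(i)\in\{0,1\}\}$; hence whenever $x(i)>0$ we automatically have $x+\one_i\notin A$, and the minimum in \eqref{eq:epsilon-def2} collapses to $\min_i\nu(i)=\rho$.

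For $\epsilon_0$ I would first reduce to the worst-case configuration. The claim is that the maximum in \eqref{eq:epsilon-def1} is attained at $x=\one_i$ with $i$ an interior rod, i.e.\ one far from the boundary of $\{0,\ldots,L\}^2$, which exists because $L\gg k$. At such an $x$ the indicator $\one_{\{x+\one_j-\one_i\in A\}}=\one_{\{\one_j\in A\}}$ is always $1$, while $\one_{\{x+\one_j\notin A\}}$ equals $1$ precisely when rod $j$ shares a vertex with $i$; adding any further rods to $x$ can only destroy the first indicator, and moving $i$ toward the boundary only shrinks the set of candidate rods $j$. Consequently the inner sum equals $\rho$ times the number of rods $j\neq i$ intersecting $i$.

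The remaining step is a direct combinatorial count. For a horizontal rod $i=\{(u_1,u_2),\ldots,(u_1+k,u_2)\}$, horizontal rods intersecting $i$ correspond to starting points $(v_1,u_2)$ with $v_1\in\{u_1-k,\ldots,u_1+k\}$, which after excluding $i$ itself gives $2k$ rods; vertical rods intersecting $i$ have starting points $(v_1,v_2)$ with $v_1\in\{u_1,\ldots,u_1+k\}$ and $v_2\in\{u_2-k,\ldots,u_2\}$, contributing $(k+1)^2$ rods. Summing yields $2k+(k+1)^2=k^2+4k+1$, and the vertical case is identical by symmetry. The only real obstacle is the monotonicity observation that justifies restricting to interior singleton configurations in the computation of $\epsilon_0$; everything else is immediate from Theorem~\ref{thm:hard-core}.
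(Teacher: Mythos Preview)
Your proof is correct and follows exactly the approach of the paper: apply Theorem~\ref{thm:hard-core} after verifying that $\epsilon_0=\rho(k^2+4k+1)$ and $\epsilon_1=\rho$. The paper merely asserts these values as ``easy to check'', whereas you supply the details---the reduction to the singleton configuration $x=\one_i$ for an interior rod (justified by the monotonicity of the product indicator under adding rods and under moving $i$ toward the boundary) and the combinatorial count $2k+(k+1)^2=k^2+4k+1$---all of which are correct.
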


As pointed out by Disertori and Giuliani \cite{DG13}, for $k$
sufficiently large, there is a phase transition as $L$ tends to
infinity at some critical value $\rho_c$, which is expected to be of
the order $k^{-2}$. Just as for the convex entropy decay considered
in \cite{DPP}, our work above yields a uniform (in $L$) curvature
bound in the asymptotically correct regime.

\subsection{Random walks on the symmetric group}
\label{sec:cycles}

Let us briefly highlight a class of examples where Corollary \ref{cor:srw} applies.

Consider the symmetric group $S_n$ of all permutations on $n$
letters. A conjugacy-invariant set of generators is given for instance
by the set $G_{n,k}$ of all $k$-cycles in $S_n$ for $1<k<n$. Here a
$k$-cycle is a cyclic permutation of length $k$. The simple random walk
$Q_{n,k}$ on the associated Cayley graph is given by
$c_{n,k}(x,\delta)\equiv |G_{n,k}|^{-1}=\binom{n}{k}^{-1}$. It is reversible
w.r.t.~the uniform probability measure $\pi_n$ on $S_n$. 

\begin{corollary}
  The simple random walk on $S_n$ generated by $k$-cycles satisfies
  $$\Ric(S_n,Q_{n,k},\pi_n)\geq 2\binom{n}{k}^{-1}\;.$$
\end{corollary}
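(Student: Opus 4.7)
The statement is presented as a direct application of Corollary~\ref{cor:srw}, so the main work is to check its hypotheses on the generating set $G_{n,k}$.

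First, I would verify the two structural properties of $G_{n,k}$ required by the Cayley graph framework of Section~\ref{sec:non-commutativ}. Closure under inversion is the identity
\begin{equation*}
(a_1\,a_2\,\cdots\,a_k)^{-1} = (a_1\,a_k\,a_{k-1}\,\cdots\,a_2),
\end{equation*}
which shows that the inverse of a $k$-cycle is again a $k$-cycle. Conjugacy-invariance is the identity
\begin{equation*}
\sigma\,(a_1\,a_2\,\cdots\,a_k)\,\sigma^{-1} \;=\; (\sigma(a_1)\,\sigma(a_2)\,\cdots\,\sigma(a_k)),
\end{equation*}
which shows that any conjugate of a $k$-cycle is again a $k$-cycle. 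For $1<k<n$, classical facts about permutation groups ensure that $G_{n,k}$ generates the ambient group, so the associated random walk is irreducible.

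Second, I would identify $(S_n,Q_{n,k},\pi_n)$ explicitly as a \emph{simple} random walk on the Cayley graph attached to $G_{n,k}$. The transition rate $c_{n,k}(x,\delta) \equiv c = \binom{n}{k}^{-1}$ is a strictly positive constant, and the uniform probability measure $\pi_n$ on $S_n$ is reversible by the symmetry of the rates and the conjugacy-invariance of $G_{n,k}$.

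Third, I would invoke Corollary~\ref{cor:srw} to conclude $\Ric(S_n,Q_{n,k},\pi_n) \geq 2c = 2\binom{n}{k}^{-1}$. For $k=2$ every generator is an involution, and the stronger (positive) assertion of Corollary~\ref{cor:srw} applies immediately. For $k\geq 3$ the generators are not involutive, which is where I expect the main obstacle: extracting the positive bound $2c$ rather than merely $\Ric \geq 0$ requires a careful examination of the proof of Theorem~\ref{thm:Cayley-Ricci} specialized to constant rates ($\alpha_1 = \alpha_2 = 0$, $\beta = 1$), so that the diagonal reserve guaranteed by Lemma~\ref{lem:B-diagonal} is not spent entirely in absorbing the off-diagonal contributions controlled via Lemma~\ref{lem:pointwise}.
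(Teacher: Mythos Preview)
Your plan matches the paper's implicit argument exactly: the corollary is stated without proof, as an immediate application of Corollary~\ref{cor:srw} to the generating set $G_{n,k}$. Your verification of inverse-closure and conjugacy-invariance is correct and is all the paper expects.

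You have, however, put your finger on a real issue rather than a mere obstacle in your own write-up. Corollary~\ref{cor:srw} gives the positive bound $\Ric\geq 2c$ only under the hypothesis that every $\delta\in G$ is an involution; without it, the conclusion is just $\Ric\geq 0$. For $k\geq 3$ the $k$-cycles are not involutions, so the corollary as stated does not yield $\Ric\geq 2\binom{n}{k}^{-1}$. Your proposed remedy---revisiting the proof of Theorem~\ref{thm:Cayley-Ricci} with $\alpha_1=\alpha_2=0$, $\beta=1$---does not help: the term $e^{\alpha_1}$ equals $1$ even for constant rates, so the estimate \eqref{eq:Boff-rewrite0} consumes the entire diagonal reserve $\cB^{\mathrm{diag}}$ and leaves nothing for a positive lower bound. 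Getting $2c$ for non-involutive generators would require a genuinely new treatment of the $(\delta,\delta^{-1})$ off-diagonal terms, not just a sharper reading of the existing proof.

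There is a second, independent sign that the statement is only fully justified for $k=2$: the number of $k$-cycles in $S_n$ is $\binom{n}{k}(k-1)!$, so the identification $|G_{n,k}|^{-1}=\binom{n}{k}^{-1}$ used to specify $c_{n,k}$ is correct precisely when $k=2$. For $k=2$ your argument is complete and coincides with the paper's; for $k\geq 3$ the gap lies in the paper's claim, not in your approach.
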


In the case of $2$-cycles or transpositions, we recover the result
obtained in \cite[Thm.~1.2]{EMT15}. In this case, the optimal constant
$\kappa$ in the MLSI \eqref{eq:MLSI} is known to satisfy the bounds
$1/2(n-1)\leq\kappa\leq 2/(n-1)$. Thus the Ricci bound that we obtain
differs roughly by a factor $n$. It is an open question to determine
the correct order for the Ricci bound.

\bibliographystyle{plain}
\bibliography{ricci}

\end{document}